\theoremstyle{plain}
\newtheorem{lem}{Lemme}[section]
\newtheorem{thm}[lem]{Theorem}
\newtheorem{prop}[lem]{Proposition}
\newtheorem{cor}[lem]{Corollary}
\theoremstyle{definition}
\newtheorem{defin}[lem]{Definition}
\theoremstyle{remark}
\newtheorem{re}[lem]{Remark}
\numberwithin{equation}{section}
\numberwithin{figure}{section}
\newcommand{\cE}{\mathcal{E}}
\newcommand{\cO}{\mathcal{O}}
\newcommand{\bZ}{\mathbf{Z}}
\newcommand{\bR}{\mathbf{R}}
\newcommand{\bC}{\mathbf{C}}
\DeclareMathOperator{\ind}{\mathrm ind}
\DeclareMathOperator{\Tr}{\mathrm Tr}
\renewcommand{\Re}{\mathrm{Re}\,}
\DeclareMathOperator{\End}{\mathrm End}
\DeclareMathOperator{\rk}{\mathrm rk}
\DeclareMathOperator{\GL}{\mathrm GL}
\newcommand{\<}{\langle}
\renewcommand{\>}{\rangle}
\newcommand{\ol}{\overline}
\newcommand{\p}{\partial}
\renewcommand{\(}{\left(}
\renewcommand{\)}{\right)}
\renewcommand{\[}{\left[}
\renewcommand{\]}{\right]}
\renewcommand{\l}{\leqslant}
\newcommand{\g}{\geqslant}
\newcommand{\e}{\epsilon}
\newcommand{\bbS}{\mathbb{S}}
\begin{document}

\title{Morse-Smale flow, Milnor metric, and  dynamical zeta function}
\author{Shu Shen}
\author{Jianqing Yu}
\address{Institut de Math\'ematiques de Jussieu-Paris Rive Gauche,
Sorbonne Universit\'e,  4 place Jussieu, 75252 Paris Cedex 05, France.}
\email{shu.shen@imj-prg.fr
}
\address{School of Mathematical Sciences, University of Science and Technology of China, 96 Jinzhai Road, Hefei, Anhui 230026, P. R. China.}
\email{jianqing@ustc.edu.cn}
\thanks {}


\dedicatory{}

\begin{abstract}	
	We introduce  a 
	Milnor metric on the determinant  line of  the cohomology of the
	underlying  closed manifold with coefficients in a flat vector 
	bundle, by means of  interactions  between the fixed points and the closed orbits of a Morse-Smale	flow,.
	This allows us to generalise the notion of the absolute value at zero 
	point of the 
	Ruelle dynamical zeta function, even in the case where  this 
	value is not well-defined in the classical  sense. 	We give a
	formula relating the Milnor metric and the Ray-Singer metric. An 
	essential ingredient of our proof is  Bismut-Zhang's Theorem.
\end{abstract}

\maketitle
\tableofcontents
\settocdepth{section}
\section*{Introduction}
The study of the relation between the combinatorial/analytic torsion  of 
a flat vector bundle and the Morse-Smale flow is initiated by Fried \cite{Friedconj} and
S\'anchez-Morgado \cite{MorgadoMorseSmale}.
In this paper, we give a formula relating
\begin{itemize}
	\item  a spectral invariant: the Ray-Singer metric associated
	with  a  flat  vector bundle with a Hermitian metric on a closed Riemannian
	manifold;
	\item  a dynamical  invariant: the Milnor metric which reflects 
	the interactions 	between the fixed points and the  
	closed orbits of the Morse-Smale flow, and  generalises the  
	absolute value at zero point of the Ruelle dynamical zeta function;
	\item a transgressed Euler class:
the Mathai-Quillen current. 
\end{itemize}

 \subsection{Background}
 Let $X$ be a connected closed   smooth   manifold of dimension $m$. Let 
 $(F,\nabla^{F})$ be a complex
 flat vector bundle of rank $r$ on $X$ with flat connection 
 $\nabla^{F}$. Let $\rho: \pi_1(X)\rightarrow \GL_r(\bC)$ be the 
 holonomy representation of the
fundamental group $\pi_1(X)$.
%
%
Denote by $H^{\scriptscriptstyle\bullet}(X,F)$ the cohomology of
the sheaf of locally constant sections of $F$, and by 
$\lambda=\bigotimes_{i=0}^{m}\(\det H^{i}(X,F)\)^{(-1)^{i}}$ the
determinant line of $H^{{\scriptscriptstyle\bullet}}(X,F)$.

{Assume that  $H^{{\scriptscriptstyle\bullet}}(X,F)=0$ and that $F$ is equipped 
with a flat metric, which is equivalent to say that  its holonomy
representation $\rho$ is unitary.  
The Reidemeister torsion  \cite{ReidemeisterTorsion,FranzTorsion,dRTorsion}  is a positive real number 
defined with the  help of a triangulation on $X$. However, it does not depend on the 
triangulation and becomes a topological invariant.  It is the first 
invariant that could distinguish closed manifolds such as lens spaces 
which are homotopy equivalent but not homeomorphic.}

The analytic torsion was introduced by Ray and Singer  
\cite{RSTorsion} as an analytic counterpart of the Reidemeister torsion. 
In order to define the analytic torsion one has to choose a 
Riemannian metric on $X$. The analytic torsion is a certain 
weighted alternating product of regularized determinants of the Hodge 
Laplacians  acting on the space of differential forms  with values in 
$F$. 

The celebrated Cheeger-M\"uller Theorem
\cite{Ch79,Muller78} tells us that the Ray-Singer analytic torsion 
coincides with the Reidemeister combinatorial torsion. Bismut-Zhang \cite{BZ92} and M\"uller \cite{Muller2} simultaneously
considered generalisations of this result.
M\"uller \cite{Muller2} extended his result to the case where $F$ is 
unimodular, i.e., $|\det 
\rho(\gamma)|=1$ for all $\gamma\in \pi_{1}(X)$. Bismut and Zhang \cite[Theorem 0.2]{BZ92}  generalised the original Cheeger-M\"uller
Theorem to arbitrary flat vector bundles with arbitrary Hermitian
metrics.  
There are also various extensions to the 
equivariant case  by Lott-Rothenberg \cite{LottRothengerg}, L\"uck \cite{LuckTorsion}, and Bismut-Zhang  \cite{BZ94}, to the family case by 
Bismut-Goette  \cite{BG01}, and to 
manifolds with 
boundaries by Br\"uning-Ma \cite{BruningMa_2012}.

Let us explain Bismut-Zhang's theorem \cite[Theorem 0.2]{BZ92} in more detail. Indeed, to 
formulate their result in the case  where the flat vector bundle is not 
necessarily  acyclic or unitarily flat, Bismut and Zhang introduced the so-called Ray-Singer metric, which  is a 
metric on $\lambda$  defined as the product of  
the analytic torsion with an $L^2$-metric on $\lambda$. 
Also they introduced the Milnor metric on $\lambda$ which is  a combinatorial  
metric associated with a  Morse-Smale gradient  flow. It generalises the 
Reidemeister torsion to  the case where $F$ is neither  acyclic 
nor unitarily flat. In this way, 
they were able to extend the Cheeger-M\"uller Theorem to a comparison theorem of two metrics on $\lambda$, one is analytic and the other 
one is combinatorial.

%
%
%
%

The study of the relation between the combinatorial/analytic torsion  and the dynamical system
can be traced  back to Milnor \cite{MilnorZcover}.  Fried
\cite{FriedRealtorsion}  showed that on hyperbolic manifolds the analytic torsion of an acyclic unitarily flat vector
bundle is equal to the  value at zero point   of the Ruelle dynamical zeta
function of the geodesic flow.  He conjectured 
\cite[p.\,66~Conjecture]{Friedconj} that similar results should hold true for  more general
flows. In \cite{Shfried}, following previous contributions 
by
Moscovici-Stanton \cite{MStorsion}, using Bismut's orbital integral formula 
\cite{B09}, the  author  affirmed  the Fried
conjecture for  geodesic flows on  closed locally symmetric 
manifolds. In \cite{Shen_Yu}, the authors made a further 
generalisation to closed locally symmetric orbifolds.

Besides the gradient flow, Morse-Smale flow is the simplest structurally stable dynamical system
which has only two  types of  recurrent behaviors: closed orbits and
fixed points \cite{Palis68,PalisSmale70}.  Fried \cite[Theorem 3.1]{Friedconj}
proved  his conjecture for the Morse-Smale flows without fixed points. 
When compared with Bismut-Zhang's Theorem  \cite[Theorem 0.2]{BZ92}, 
it seems  natural  to ask 
whether there is a relation between the torsion invariant (or more generally the
Ray-Singer metric for non acyclic and non unitarily flat vector bundle) and a general Morse-Smale flow which has both
fixed points and closed orbits.

This is one of the motivations  of S\'anchez-Morgado's work
\cite{MorgadoMorseSmale}. He showed that the
heteroclinic orbits have a non trivial contribution in the torsion 
invariant, and in this way he constructed  a counterexample to Fried's 
conjecture on Seifert manifolds. 

In this
paper, we introduce a new Milnor metric, 
which indeed contains the heteroclinic  contributions and 
generalises  the absolute  value at zero point of the Ruelle dynamical zeta function, and we 
give a comparison theorem for the Milnor and Ray-Singer metrics on 
$\lambda$. We believe 
that in this way we  give  a complete answer in the affirmative to the above  question.

Let us  mention that there is another interpretation of the Ruelle dynamical zeta 
function  provided by Dang-Rivi\`ere \cite{DR_MSR}. See also 
\cite{DR_MSG, DR_MSI, DR_MSII, DR_MSW} for related works. 

 \subsection{A new Milnor metric}
  A vector field $V$ is called Morse-Smale if $V$ generates a flow
  whose  nonwandering set is the union of a finite set $A$ of
  hyperbolic fixed points and a finite set $B$ of hyperbolic closed  orbits, and if the stable and unstable manifolds of the critical
  elements in $A\coprod B$ intersect transversally.

  Let us take a Hermitian metric $g^{F}$ on $F$. In Section 
  \ref{sSF},  we construct on $\lambda$   a   Milnor type metric
$\|\cdot\|^{M,2}_{\lambda, V}$ using  long exact 
sequences associated with a Smale 
filtration of the Morse-Smale flow.  Note that 
the long exact 
sequences  encode the information about the interactions
between the critical elements in $A\coprod B$.  %
%
 If $V$ is a negative gradient of a Morse function, 
then our Milnor metric is just the classical one as defined in  
 \cite[Definition 1.9]{BZ92}, which generalises  \cite{MilnorWhiteheadTor}.
 

Our first result says that  the  Milnor metric 
$\|\cdot\|^{M,2}_{\lambda, V}$ is a generalisation of  the absolute value at 
zero point of 
the Ruelle dynamical zeta function. For a closed orbit $\gamma\in B$, 
let $\ell_{\gamma}\in \bR^{*}_{+}$ be its minimal period, and let 
$\ind(\gamma)\in \mathbf{N}$  be its index 
(see \eqref{indexg}). Take  $\Delta(\gamma)$ to be $1$ if 
$\gamma$ is untwist and $-1$ in the contrary  case (see \eqref{Delta}). The  Ruelle dynamical zeta function 
 is defined for $s\in \bC$ by 
\begin{align}\label{intoR}
	R_{\rho}(s)=\prod_{\gamma\in
	B}\det\(1-\Delta(\gamma)\rho(\gamma)e^{-s
	\ell_\gamma}\)^{(-1)^{\ind(\gamma)}}.
\end{align}

\begin{prop}\label{prop}
	If $V$ does not have any fixed points, and if none of $\Delta(\gamma)$ is an eigenvalue of $\rho(\gamma)$, then 
$H^{{\scriptscriptstyle\bullet}}(X,F)=0$, and the norm of the canonical section 
$\mathbf{1}\in \bC=\lambda$ is given by 
\begin{align}
	\|\mathbf{1}\|^{M}_{\lambda,V}=|R_{\rho}(0)|^{-1}. 
\end{align}
\end{prop}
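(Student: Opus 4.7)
The plan is to exploit directly the Smale filtration used in Section~\ref{sSF} to build $\|\cdot\|^{M}_{\lambda,V}$. Since $V$ has no fixed points, the nonwandering set is $B$, so one can order $B=\{\gamma_{1},\dots,\gamma_{|B|}\}$ compatibly with the Smale partial order and choose a filtration $\emptyset=X_{0}\subset X_{1}\subset\cdots\subset X_{|B|}=X$ such that each $X_{i}\setminus X_{i-1}$ is an isolating block for the single closed orbit $\gamma_{i}$. The Milnor metric is then, by construction, obtained from the torsions of the long exact sequences
\begin{equation*}
\cdots\to H^{k}(X_{i},X_{i-1};F)\to H^{k}(X_{i},F)\to H^{k}(X_{i-1},F)\to H^{k+1}(X_{i},X_{i-1};F)\to\cdots
\end{equation*}
so everything is controlled by the local relative cohomology at each $\gamma\in B$.

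First I would compute $H^{\cdot}(X_{i},X_{i-1};F)$ for a single hyperbolic closed orbit $\gamma$. An isolating block for $\gamma$ is diffeomorphic to the mapping torus of the linearised Poincar\'e return map on the normal bundle, split into its stable and unstable parts of ranks $\dim X-1-\ind(\gamma)$ and $\ind(\gamma)$. By excision, a homotopy retraction to the unstable disc bundle, and a Thom isomorphism, the relative cohomology reduces to the cohomology of the mapping torus $S^{1}$ of $\gamma$ with coefficients in the flat bundle $F|_{\gamma}$ twisted by the orientation character of the unstable bundle. The corresponding Wang sequence collapses to a two-term acyclic complex
\begin{equation*}
F_{x_{0}}\xrightarrow{\;1-\Delta(\gamma)\rho(\gamma)^{-1}\;}F_{x_{0}},
\end{equation*}
concentrated in degrees $\ind(\gamma)$ and $\ind(\gamma)+1$, where $F_{x_{0}}$ is the fibre of $F$ over a basepoint $x_{0}\in\gamma$, and $\Delta(\gamma)$ is exactly the twist \eqref{Delta} coming from the orientation of the unstable manifold around $\gamma$.

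The hypothesis that $\Delta(\gamma)$ is not an eigenvalue of $\rho(\gamma)$ makes the above map invertible, so each $H^{\cdot}(X_{i},X_{i-1};F)$ vanishes. Inducting on $i$ via the long exact sequence then gives $H^{\cdot}(X,F)=0$, so $\lambda=\bC$ and $\mathbf{1}$ is the canonical section. The Milnor metric of $\mathbf{1}$ is now the product over $i$ of the torsions of these two-term acyclic complexes, each of which equals $|\det(1-\Delta(\gamma_{i})\rho(\gamma_{i})^{-1})|^{(-1)^{\ind(\gamma_{i})+1}}$, since a two-term acyclic complex $V\xrightarrow{A} V$ placed in degrees $k,k+1$ contributes $|\det A|^{(-1)^{k+1}}$ to the Reidemeister/Milnor torsion. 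Comparing with \eqref{intoR} at $s=0$ yields $\|\mathbf{1}\|^{M}_{\lambda,V}=|R_{\rho}(0)|^{-1}$.

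The main obstacle is the careful bookkeeping of signs and orientation twists: one must check that the sign $\Delta(\gamma)$ that appears in the Wang differential above is indeed the same one that enters \eqref{intoR}, and that the exponents $(-1)^{\ind(\gamma)}$ match correctly after converting a product of torsions of two-term complexes into a product of determinants. A secondary technical point is that the Milnor metric a priori depends on a choice of Smale filtration; here one must verify, using the acyclicity proved along the way, that successive steps in the filtration do not introduce extra transgression terms, so that the product really does telescope to the claimed formula.
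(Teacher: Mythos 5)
Your proposal is correct and follows essentially the same route as the paper: the paper also runs through the Smale filtration, invokes Franks' computation \eqref{E1g} identifying each relative cohomology $H^{\cdot}(X^{p+1},X^{p},F)$ with the degree-shifted cohomology of the circle with coefficients in $o(E^{u}_{\gamma})\otimes F|_{\gamma}$ (your Wang/mapping-torus complex is exactly the model complex \eqref{CS1} with holonomy $\Delta(\gamma)\rho(\gamma)$), deduces acyclicity, and multiplies the resulting torsions \eqref{sa} to get $|R_{\rho}(0)|^{-1}$. The filtration-independence you flag is handled in the paper by Proposition \ref{propFp}.
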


\subsection{The main result of the paper}
Let $g^{TX}$ be a metric on $TX$. Let
$\psi(TX,\nabla^{TX})$ be the Mathai-Quillen current  associated with the
Levi-Civita connection $\nabla^{TX}$ (see Section \ref{sMQ}). It is a
current of degree $m-1$ defined
on the total space of the tangent bundle $TX$, which takes values in 
$o(TX)$, the orientation line bundle of $TX$. Let $\|\cdot\|^{RS,2}_{\lambda}$ be the Ray-Singer metric
on $\lambda$ associated with $(g^{TX},g^{F})$ (see Section \ref{sRS}).
Set $	
\theta(F,g^{F})=\Tr\[(g^{F})^{-1}\nabla^{F}g^{F}\]\in \Omega^{1}(X)$.  
Our main result  is the following.  

\begin{thm}\label{thm1} We have
	\begin{align}\label{eqthm1}
	\log
	\(\frac{\|\cdot\|^{RS,2}_{\lambda}}{\|\cdot\|^{M,2}_{\lambda,V}}\)=-\int_{X}\theta\(F,g^{F}\)(-V)^{*}\psi\(TX,\nabla^{TX}\).\end{align}
\end{thm}

If $V$ does not have any closed orbits, Theorem \ref{thm1} reduces to 
\cite[Theorem 0.2]{BZ92}. Note also that if $F$ is unitarily flat, then the right-hand side of 
\eqref{eqthm1} varnishes. Therefore, if $V$ does not have any fixed  
points and if $F$ is unitarily flat, by  Proposition \ref{prop},  our theorem corresponds to \cite[Theorem 3.1]{Friedconj}.

Our proof of Theorem \ref{thm1} is based on a result of Franks
\cite[Proposition 5.1]{Franks1979}, who
constructed  a gradient flow by destroying the
closed orbits of the Morse-Smale flow.
In Section \ref{sCMM}, we first
establish a comparison formula between 
our Milnor metric 
associated with the original Morse-Smale flow
and the classical one associated with Franks'
gradient flow. In Section \ref{S2}, to obtain Theorem \ref{thm1}, we apply Bismut-Zhang's formula \cite[Theorem
0.2]{BZ92}, which compares the 
Ray-Singer metric with the
Milnor metric for Franks' gradient flow.

Recall that $F$ is said to be  unimodular, if its holonomy representation
$\rho$ is unimodular, i.e., $|\det \rho(\gamma)|= 1$ for all 
$\gamma\in \pi_{1}(X)$.  This is
equivalent to the fact that there is a Hermitian metric $g^{F}$
such that $\theta\(F,g^{F}\)=0.$
By Theorem \ref{thm1}, we get
\begin{cor}
	If $(F,g^{F})$ is unimodular, then
	\begin{align}
	\|\cdot\|^{RS,2}_{\lambda}=\|\cdot\|^{M,2}_{\lambda,V}.
\end{align}
\end{cor}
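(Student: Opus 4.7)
The plan is to deduce this corollary as an immediate consequence of Theorem \ref{thm1}. The hypothesis that $(F,g^{F})$ is unimodular is, according to the equivalent formulation recalled just before the corollary, precisely the statement that the closed $1$-form
\[
\theta\(F,g^{F}\)=\Tr\[g^{F,-1}\nabla^{F}g^{F}\]\in \Omega^{1}(X)
\]
vanishes identically. Consequently, the integrand on the right-hand side of Theorem \ref{thm1} is the pointwise product of the current $V^{*}\psi(TX,\nabla^{TX})$ with the zero $1$-form, so the integral is zero.

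Plugging this into Theorem \ref{thm1} gives
\[
\log\(\frac{\|\cdot\|^{RS,2}_{\lambda}}{\|\cdot\|^{M,2}_{\lambda,V}}\)=0,
\]
which yields the claimed equality of metrics on $\lambda$.

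The argument contains no real obstacle, since all the analytic and dynamical content has already been absorbed into the proof of Theorem \ref{thm1}. The only point worth checking carefully is that the interpretation of the hypothesis is the correct one: one reads ``$(F,g^{F})$ is unimodular'' as asserting both that $F$ is unimodular as a flat bundle \emph{and} that the specific Hermitian metric $g^{F}$ under consideration is chosen so that $\theta(F,g^{F})=0$. With this reading the corollary is truly a one-line consequence, and no further work (such as an independent anomaly computation) is needed.
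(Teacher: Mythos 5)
Your proof is correct and is exactly the paper's argument: the corollary is stated there as an immediate consequence of Theorem \ref{thm1}, using the equivalence (recalled just before it) that unimodularity of $(F,g^{F})$ means $\theta(F,g^{F})=0$, which kills the right-hand side. Your reading of the hypothesis is the intended one, and no further work is needed.
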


\subsection{Organisation  of the paper}
In Section \ref{Sp1}, we
introduce some conventions on the determinant line, the cohomology
of a circle, and also a long exact sequence associated with three manifolds $Y_{1}\subset Y_{2}\subset Y_{3}$.  In Section \ref{S1M}, we recall some background on Morse-Smale
flows. We also introduce the Milnor type metric, and we show Proposition 
\ref{prop}. In
Section \ref{S2}, we recall the constructions of Mathai-Quillen 
current and Ray-Singer metric. We show our main result.
We use the convention $\mathbf{N}=\{0,1,2,\ldots\}$ and $\mathbf{R}_{+}^{*}=(0,\infty)$. 

\subsection{Acknowledgement}
We are indebted to  Xiaolong Han and Xiaonan Ma for reading  a preliminary version of this paper and for useful suggestions. S.S. would like to thank  Nguyen Viet Dang and  Gabriel  Rivi\`ere for 
fruitful  discussions  on Morse-Smale flows. This paper was written 
while J.Y. was visiting the Institut de
Math\'ematiques de Jussieu-Paris Rive Gauche in Spring 2018.
The hospitality of the Institute and the partial financial support
from NSFC under Grant No.~11771411 are gratefully acknowledged.

\settocdepth{subsection}
\section{Preliminary} \label{Sp1}
This section is organised as follows. In Section \ref{sDL}, we
introduce our convention on the determinant line. In Section
\ref{sCS1}, we give a metric on the determinant line of  the 
cohomology of $\mathbb{S}^{1}$. This is  our model case near the 
closed orbits of a flow. In Section \ref{Fp}, we explain a long exact 
sequence associated with a triple  of manifolds $Y_{1}\subset 
Y_{2}\subset Y_{3}$. 

\subsection{The determinant line}\label{sDL}
Let $W$ be a complex finite dimensional vector space. We denote by
$W^*$ the dual space. If $\dim W=1$, we write $W^{-1}=W^{*}$.  Take
$\Lambda^{\scriptscriptstyle\bullet}(W)=\bigoplus_{j=0}^{\dim W}\Lambda^{j}(W)$ to be the exterior
algebra.

Set
\begin{align}
\det W =\Lambda^{\dim W} (W).
\end{align}
Clearly, $\det W$ is a complex line. If $W=\{0\}$, then  
\begin{align}
	\det 
W=\mathbf{\bC}. 
\end{align}
Let $E^{{\scriptscriptstyle\bullet}}=\oplus_{i\in \bZ} E^{i}$ be a finite dimensional  $\mathbf{Z}$-graded
space. Put
\begin{align}
	\det E^{{\scriptscriptstyle\bullet}}= \bigotimes_{i\in \bZ} \(\det E^{i}\)^{(-1)^{i}}.
\end{align}

For $m\in \mathbf{N}$, let 
\begin{align}
	(C^{{\scriptscriptstyle\bullet}},d): 0\to C^{0}\to C^{1}\to\cdots\to C^{m}\to 0
\end{align}
be a complex of finite dimensional vector 
spaces. By \cite{KM76} or 
\cite[(1.5)]{BGS1}, we 
have the canonical isomorphism of lines 
\begin{align}\label{detCdetH}
\tau_{C^{{\scriptscriptstyle\bullet}}}:	\det C^{{\scriptscriptstyle\bullet}}\simeq \det H^{{\scriptscriptstyle\bullet}}(C^{{\scriptscriptstyle\bullet}},d). 
\end{align}
If  $s_{j}^{k}\in C^{k}$  such that $\{s_{j}^{k}\}_{j=1}^{k_{j}}$ projects to a basis of 
$C^{k}/\ker (d_{|C^{k}})$, if $\mu_{j}^{k}\in \ker( d_{|C^{k}})$ such that  
$\{\mu_{j}^{k}\}_{j=1}^{k_{j}'}$ projects to a basis of 
$H^{k}(C^{\scriptscriptstyle\bullet},d)$,
then 
\begin{align}\label{Maja1}
	\(\wedge_{j} s^{0}_{j}\otimes \wedge_{j}\mu^{0}_{j}\)\otimes 
	\(\wedge_{j} (ds^{0}_{j})\otimes 
	\wedge_{j}s^{1}_{j} \otimes \wedge_{j}\mu^{1}_{j}\)^{-1} \otimes\cdots \otimes\(\wedge_{j} 
	(ds^{m-1}_{j})\otimes \wedge_{j}\mu^{m}_{j}\)^{(-1)^m}
\end{align}
defines a canonical element of $\det C^{\scriptscriptstyle\bullet}$. 
If $\ol{\mu}_{j}^{k}$ denotes the image of $\mu_{j}^{k}$ in $H^{k}(C^{{\scriptscriptstyle\bullet}},d)$, under 
\eqref{detCdetH}, the element \eqref{Maja1} maps to
\begin{align}\label{Maja11}
	\( \wedge_{j}\ol{\mu}^{0}_{j}\)\otimes 
	\( \wedge_{j}\ol{\mu}^{1}_{j}\)^{-1} \otimes\cdots \otimes\( 
	\wedge_{j}\ol{\mu}^{m}_{j}\)^{(-1)^m}\in \det 
H^{{\scriptscriptstyle\bullet}}(C^{{\scriptscriptstyle\bullet}},d). 
\end{align}

%
%

\subsection{The cohomology of $\bbS^{1}$}\label{sCS1}
Let $\mathbb{S}^{1}=\mathbf{R}/\mathbf{Z}$ be an oriented circle. Let $F$ be a flat vector 
bundle of rank $r$ on $\mathbb{S}^{1}$. Let $\rho: \pi_{1}(\bbS^{1})\to  
\GL_{r}(\bC)$ be the holonomy\footnote{For any flat vector bundle  
$F$ on a manifold $X$, the holonomy is a  representation $\rho: 
\pi_{1}(X)\to \mathrm{GL}_{r}(\bC)$ of the fundamental group 
$\pi_{1}(X)$ such that $F=\pi_{1}(X)\backslash (\widetilde{X}\times 
\bC^{r})$, where $\widetilde{X}$ is the universal  cover of $X$ and  $\pi_{1}(X)$ acts on the left on $\widetilde{X}$ by 
the deck transformation 
and on  $\bC^{r}$ by $\rho$.} of $F$. Let $a_{0}\in \pi_{1}(\bbS^{1})$ be the  
generator of $\pi_{1}(\bbS^{1})$, which is compatible with the 
orientation on $\mathbb{S}^{1}$. Set $A=\rho(a_{0})\in 
\mathrm{GL}_{r}(\bC)$.

%




Consider the canonical  triangulation on $\bbS^{1}$  induced by one $0$-simplex 
$\sigma_{0}$ and one $1$-simplex $\sigma_{1}$ as in  Figure \ref{fig:0}. 
\begin{figure}[htbp] 
\centering
\includegraphics[width=1in]{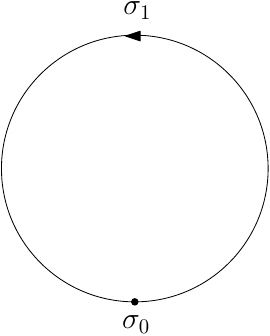} 
\caption{A triangulation on $\bbS^{1}$.}\label{fig:0} 
\end{figure} 
It induces a complex of simplicial cochains with values in  $F$ given 
by 
\begin{align}\label{CS1}
\(C^{{\scriptscriptstyle\bullet}}(\bbS^{1},F),d\):		\xymatrixcolsep{3pc}\xymatrix{
0  \ar[r] &\bC^{r}
\ar[r]^{A-1} &\bC^{r} \ar[r]          &0.}
\end{align}
By \eqref{detCdetH}, the canonical element $
\mathbf{1}\in \bC=\det C^{{\scriptscriptstyle\bullet}}(\bbS^{1},F)$ defines  an element
\begin{align}\label{eqsigmaa}
\sigma_{A}\in 	\det H^{{\scriptscriptstyle\bullet}}\(\mathbb{S}^{1},F\).
\end{align}
We equip $\det H^{{\scriptscriptstyle\bullet}}\(\mathbb{S}^{1},F\)$ with a metric  
$\|\cdot\|^{2}_{\det H^{{\scriptscriptstyle\bullet}}\(\mathbb{S}^{1},F\)}$ such that 
\begin{align}\label{nsa}
	\|\sigma_{A}\|_{\det H^{{\scriptscriptstyle\bullet}}\(\mathbb{S}^{1},F\)}=1
\end{align}

If $1$ is not an eigenvalue of $A$, then $H^{{\scriptscriptstyle\bullet}}(\bbS^{1},F)=0$. 
By \eqref{Maja1},  the norm of the canonical element 
$\mathbf{1}\in \bC=\det H^{{\scriptscriptstyle\bullet}}(\bbS^{1},F)$ is given by 
\begin{align}\label{sa}
	\|\mathbf{1}\|_{\det 
	H^{{\scriptscriptstyle\bullet}}\(\mathbb{S}^{1},F\)}=\left|\det\(1-A\)\right|^{-1}.
\end{align}

\begin{re}
	Equation \eqref{sa} is just Proposition \ref{prop} for  the 	rotation flow on  	$\mathbb{S}^{1}$. 
\end{re}

\begin{re}
	Since the flat vector bundle is not necessarily unimodular, i.e., 
	$|\det(A)|$ is 
	not necessarily equal to $1$,  the choice of the 
	orientation on $\bbS^{1}$ is very important.  
\end{re}

\subsection{A fusion principle}\label{Fp}
Let $Y_{1}\subset Y_{2}\subset Y_{3}$ be three compact smooth 
manifolds with boundaries of 
the same dimension such that $Y_{1}\subset \mathring{Y_{2}}$ and 
$Y_{2}\subset \mathring{Y_{3}}$. Let $F$ be a flat vector bundle on 
$Y_{3}$ of rank $r$. Denote by $H^{{\scriptscriptstyle\bullet}}(Y_{3},Y_{2},F)$, 
$H^{{\scriptscriptstyle\bullet}}(Y_{3},F)$, \ldots, the corresponding relative or absolute 
cohomologies  with coefficients in  $F$. 

As in \cite[(0.16)]{BruningMa_2012}, we have a long exact sequence 
\begin{align}\label{Les}
\cdots\to 	H^{i}(Y_{3},Y_{2},F)\to H^{i}(Y_{3},Y_{1},F)\to 
	H^{i}(Y_{2},Y_{1},F)\to H^{i+1}(Y_{3},Y_{2},F) \to  \cdots.
\end{align}
By \eqref{detCdetH} and \eqref{Les}, we get an isomorphism of lines 
\begin{align}\label{Maja3}
	f_{21,32	}:\det H^{{\scriptscriptstyle\bullet}}(Y_{2},Y_{1},F)\otimes \det 		H^{{\scriptscriptstyle\bullet}}(Y_{3},Y_{2},F)\simeq \det H^{{\scriptscriptstyle\bullet}}(Y_{3},Y_{1},F).
\end{align}
Using the other triples  $(\varnothing,Y_{1},Y_{2})$ and 
$(\varnothing,Y_{1},Y_{3})$, we get similar isomorphisms
\begin{align}\label{Maja4}
	\begin{aligned}
f_{1,21}:	\det H^{{\scriptscriptstyle\bullet}}(Y_{1},F)\otimes \det 	
	H^{{\scriptscriptstyle\bullet}}(Y_{2},Y_{1},F)\simeq \det H^{{\scriptscriptstyle\bullet}}(Y_{2},F),\\
f_{1,31}:	\det H^{{\scriptscriptstyle\bullet}}(Y_{1},F)\otimes \det 	
	H^{{\scriptscriptstyle\bullet}}(Y_{3},Y_{1},F)\simeq \det H^{{\scriptscriptstyle\bullet}}(Y_{3},F). 
\end{aligned}
\end{align}
By \eqref{Maja3} and \eqref{Maja4}, we see that 
$f_{2,32}\circ\(f_{1,21}\otimes\mathrm{id}\) $ and $f_{1,31}\circ \(\mathrm{id} \otimes f_{21,32}\)$
define  two isomorphisms
\begin{align}\label{FYYY}
	\det H^{{\scriptscriptstyle\bullet}}(Y_{1},F)\otimes \det H^{{\scriptscriptstyle\bullet}}(Y_{2},Y_{1},F)\otimes \det 	
	H^{{\scriptscriptstyle\bullet}}(Y_{3},Y_{2},F)	\simeq  \det H^{{\scriptscriptstyle\bullet}}(Y_{3},F).
\end{align}

\begin{prop}\label{propFp}
	There is $\mu=1$ or $-1$\footnote{See \cite[Remark 1.2]{BGS1} or 
	\cite{KM76} for the detail about the sign.  } such that 
	\begin{align}\label{mabu}
f_{2,32}	\circ\(f_{1,21}\otimes\mathrm{id}\) = \mu \times	
	f_{1,31}\circ \(\mathrm{id} \otimes f_{21,32}\). 
	\end{align}
\end{prop}
\begin{proof}
	As in \cite[(0.15)]{BruningMa_2012}, let us take a smooth triangulation 
	of $Y_{3}$ such that it induces also smooth triangulations on 
	$Y_{1}$ and $Y_{2}$. Denote by $(C^{{\scriptscriptstyle\bullet}}(Y_{1},F),d), 
	(C^{{\scriptscriptstyle\bullet}}(Y_{2},Y_{1},F),d)$, \ldots, the complexes of simplicial 
cochains with  coefficients in $F$. Then we have an exact sequence of 
complexes
\begin{align}\label{eq111}
	0\to (C^{{\scriptscriptstyle\bullet}}(Y_{2},Y_{1},F),d)\to (C^{{\scriptscriptstyle\bullet}}(Y_{2},F),d)\to 
	(C^{{\scriptscriptstyle\bullet}}(Y_{1},F),d)\to 0.
\end{align}
By \eqref{detCdetH} and \eqref{eq111}, we get an isomorphism of lines 
\begin{align}
	f^{C}_{1,21}: \det C^{{\scriptscriptstyle\bullet}}(Y_{1},F)\otimes \det 
	C^{{\scriptscriptstyle\bullet}}(Y_{2},Y_{1},F)\to \det C^{{\scriptscriptstyle\bullet}}(Y_{2},F).
\end{align}
We can define $f^{C}_{2,32}$, $f^{C}_{1,31}$ and $f^{C}_{21,32}$ in a 
similar way. By an easy calculation, there is $\mu=1$ or $-1$ such that 
\begin{align}\label{mabuC}
f_{2,32}^{C}	\circ\(f_{1,21}^{C}\otimes\mathrm{id}\) = \mu	\times 
	f_{1,31}^{C}\circ \(\mathrm{id} \otimes f_{21,32}^{C}\). 
	\end{align}

	By \eqref{Maja1} and \eqref{Maja11}, there is $\mu=1$ or $-1$ such that the  diagram 
	\begin{align}\label{eqBGSMA}
		\begin{aligned}
			\xymatrix{
	\det C^{{\scriptscriptstyle\bullet}}(Y_{1},F)\otimes \det C^{{\scriptscriptstyle\bullet}}(Y_{2},Y_{1},F)
		\ar[d]^{\tau_{C^{{\scriptscriptstyle\bullet}}(Y_{1},F)}\otimes\tau_{C^{{\scriptscriptstyle\bullet}}(Y_{2},Y_{1},F)}}
		\ar[r]^-{f^{C}_{1,21}} &\det C^{{\scriptscriptstyle\bullet}}(Y_{2},F)
\ar[d]^{\mu\tau_{C^{{\scriptscriptstyle\bullet}}(Y_{2},F)}}
\\
		\det H^{{\scriptscriptstyle\bullet}}(Y_{1},F)\otimes \det 
		H^{{\scriptscriptstyle\bullet}}(Y_{2},Y_{1},F)\ar[r]^-{f_{1,21}}         &\det H^{{\scriptscriptstyle\bullet}}(Y_{2},F)}	
	\end{aligned}
	\end{align}
	commutes. Tensoring each vertical line  of \eqref{eqBGSMA} by the 
	isomorphism 
	\begin{align}
	\tau_{C^{{\scriptscriptstyle\bullet}}(Y_{3},Y_{2},F)}:\det C^{{\scriptscriptstyle\bullet}}(Y_{3},Y_{2},F)\simeq 
	\det H^{{\scriptscriptstyle\bullet}}(Y_{3},Y_{2},F),
	\end{align}
	and using \eqref{eqBGSMA} again for the pair  $(Y_{2},Y_{3})$, we 
	find that 
	there is $\mu=1$ or $-1$ such that 	the diagram \begin{align}\label{eqDjht}
		\begin{aligned}
			\xymatrixcolsep{6pc}\xymatrix{
	\det C^{{\scriptscriptstyle\bullet}}(Y_{1},F)\otimes \det 
	C^{{\scriptscriptstyle\bullet}}(Y_{2},Y_{1},F)\otimes \det C^{{\scriptscriptstyle\bullet}}(Y_{3},Y_{2},F)
		\ar[d]^{\tau_{C^{{\scriptscriptstyle\bullet}}(Y_{1},F)}\otimes\tau_{C^{{\scriptscriptstyle\bullet}}(Y_{2},Y_{1},F)}\otimes\tau_{C^{{\scriptscriptstyle\bullet}}(Y_{3},Y_{2},F)}}
		\ar[r]^-{f^{C}_{2,32}	\circ(f^{C}_{1,21}\otimes\mathrm{id}) 
		} &\det C^{{\scriptscriptstyle\bullet}}(Y_{3},F)
\ar[d]^{\mu\tau_{C^{{\scriptscriptstyle\bullet}}(Y_{3},F)}}
\\
		\det H^{{\scriptscriptstyle\bullet}}(Y_{1},F)\otimes \det 
		H^{{\scriptscriptstyle\bullet}}(Y_{2},Y_{1},F)\otimes \det 
		H^{{\scriptscriptstyle\bullet}}(Y_{3},Y_{2},F)\ar[r]^-{f_{2,32}	\circ\(f_{1,21}\otimes\mathrm{id}\) }         
		&\det H^{{\scriptscriptstyle\bullet}}(Y_{3},F)}	
	\end{aligned}
	\end{align}
	commutes. 
 In \eqref{eqDjht}, if we replace the
	horizontal morphismes  by $f_{1,31}^{C}\circ \(\mathrm{id} \otimes 
	f_{21,32}^{C}\)$ and  $f_{1,31}\circ \(\mathrm{id} \otimes 
	f_{21,32}\)$, the corresponding  diagram still commutes. By \eqref{mabuC}, we get 
	\eqref{mabu}. 
\end{proof}

\section{Milnor metric}\label{S1M}
This section is organised as follows. In Sections \ref{sMS} and
\ref{sDZf}, we recall
the definitions of Morse-Smale flow and the associated Ruelle dynamical zeta
function. In Section \ref{sMSMF}, we recall  some results due to Franks
\cite{Franks1979,Franks1982} on the construction of  a new
gradient  flow  by destroying  the closed orbits of the original Morse-Smale flow.  In Section
\ref{sSF}, using the Smale filtration, we introduce the Milnor 
metric. In Section  \ref{sCMM}, we establish a comparison  formula for the two Milnor
metrics,  one is associated with the Morse-Smale flow and the other is
associated with the gradient  flow constructed by Franks.

We refer  the reader to  the classical textbook of Palis and de 
Melo \cite{Palis82} for the basic notion on dynamical system.

\subsection{Morse-Smale flow}\label{sMS}
Let $X$ be a connected closed smooth manifold of dimension $m$. Let $V$ be a
 vector field on $X$. Consider the differential equation 
 \begin{align}\label{eqDiffV}
	\frac{dx}{dt}=V(x).
\end{align} 
 Equation \eqref{eqDiffV} defines a group of differomorphism 
 $\(\phi_{t}\)_{t\in \bR}$ of $X$.

If $x\in X$,  an orbit of $x$ is defined by the image $t\in \bR\to 
\phi_{t}(x)\in X$.  We call $x\in X$ is a fixed point, if its  orbit reduces to a point, i.e,  for all $t\in \bR$,  
\begin{align}
	\phi_{t}(x)=x. 
\end{align} 
Clearly, $x\in X$ is a fixed point if and only if $V(x)=0$. 
We call an orbit is closed if it is diffeomorphic to $\bbS^{1}$. 
Denote by $A$ the set of fixed points and by $B$ the set of closed orbits.

\begin{defin}
A fixed point $x\in X$ of the flow $\phi_{\cdot}$ is called 
hyperbolic if there is a $\phi_{t}$-invariant splitting
\begin{align}
	T_{x}X= E^{u}_{x}\oplus E^{s}_{x},
\end{align}
and there exist $C>0,\theta>0$ and a Riemannian metric $g^{TX}$ on $X$
such that for  $v\in E^{u}_{x}$,  $v'\in E^{s}_{x}$, and $t>0$, we have
\begin{align}\label{eqMS}
&\left|\phi_{- t,*}v\right|\l
Ce^{-\theta t}\left|v\right|,&\left|\phi_{ t,*}v'\right|\l
Ce^{-\theta t}\left|v'\right|.
\end{align}
The unstable and stable manifolds of the  hyperbolic fixed point $x$ are defined by
\begin{align}
	&W^{u}_{x}=\Big\{y\in X: \lim_{t\to -
	\infty}d_{X}(\phi_{t}(y),x)=0\Big\},&W^{s}_{x}= \Big\{y\in X: \lim_{t\to
	\infty}d_X(\phi_{t}(y),x)=0 \Big\},
\end{align}
where $d_{X}$ denotes the Riemannian distance on $(X,g^{TX})$. The index $\ind(x)\in \mathbf{N}$ of $x$ is defined by 
\begin{align}\label{eqindxEu}
	\ind(x)=\dim E^{u}_{x}.
\end{align}
\end{defin}

Note that if $V=-\nabla f$ is a negative gradient of a Morse function 
$f$
with respect to some Riemannian metric, then the index $\ind(x)$ of 
the critical point $x$ is just the Morse index of $f$ at $x$. 

\begin{defin}
A  closed orbit $\gamma$ of the flow $\phi_{\cdot}$ is called 
hyperbolic,
if there is  a $\phi_{t}$-invariant continuous  splitting 
\begin{align}\label{eq:AF1}
TX|_{\gamma}=\mathbf{R}V\oplus E^{u}_{\gamma}\oplus E^{s}_{\gamma},
\end{align}
of $C^{0}$-vector 
bundles over $\gamma$ such that \eqref{eqMS} holds.  The associated unstable and
stable manifolds are defined by
\begin{align}
\begin{aligned}
	W^{u}_{\gamma}=\bigcup_{x\in \gamma} \Big\{y\in X: \lim_{t\to
	-\infty}d_{X}(\phi_{t}(y),\phi_{t}(x))=0 \Big\},\\ W^{s}_{\gamma}=\bigcup_{x\in \gamma} \Big\{y\in X: \lim_{t\to
	+\infty}d_{X}(\phi_{t}(y),\phi_{t}(x))=0 \Big\}.
	\end{aligned}
\end{align}
The index $\ind(\gamma)\in \mathbf{N}$ of $\gamma$ is defined by 
\begin{align}\label{indexg}
	\ind(\gamma)=\rk E^{u}_{\gamma}.
	\end{align}
\end{defin}

\begin{defin}
	The nonwandering set of $\phi_{\cdot}$ is defined by 
	\begin{align}
		\Big\{x\in X: \hbox{ $\forall$ open neighborhood $U$ of $x$,  
		$\forall\, T>0$, we have }  U\cap\bigcup_{t\g T} \phi_{t}(U)\neq 
		\varnothing \Big\}.
	\end{align}
\end{defin}
Clearly, $A\cup \bigcup_{\gamma\in B}\gamma$ is contained in the nonwandering set.


\begin{defin}
	A vector field $V$ or a flow $\phi_{\cdot}$ is called Morse-Smale if
	\begin{itemize}
		\item the sets $A$ and $B$ are finite and contain only hyperbolic elements; 
       \item the nonwandering set of
$\phi_{\cdot}$ is equal to
$	A\cup \bigcup_{\gamma\in B}\gamma$;
     \item the stable  and unstable manifold of any critical
	 element in $A\coprod B$  intersect transversally.
	\end{itemize}
\end{defin}

In the sequel , we assume
that $V$ is a
Morse-Smale vector field.

\subsection{Ruelle dynamical zeta function}\label{sDZf}
 For $\gamma\in B$, denote by $\ell_{\gamma}\in \bR^{*}_{+}$ its
 minimal period. 
%
 A closed orbit $\gamma\in B$
 is called untwist if $E^{u}_{\gamma}$ is orientable along $\gamma$, and is
 called twist otherwise. Put
 \begin{align}\label{Delta}
	 \Delta(\gamma)=\left\{
	 \begin{array}{cl}
		 1,&\hbox{if } \gamma \hbox{ is untwist},\\
		 -1,&\hbox{if } \gamma \hbox{ is twist}.
	\end{array}
	 \right.
 \end{align}

 Let $\rho:\pi_{1}(X)\to \mathrm{GL}_{r}(\bC)$ be a representation of the fundamental group of $X$. If
 $\gamma\in B$, denote by $\rho(\gamma)$
 the holonomy along $\gamma$. Clearly, $\rho(\gamma)$ is well-defined up to a conjugation.

\begin{defin} The twist Ruelle dynamical zeta function is a meromorphic
	function on $\bC$ defined for $s\in \bC$ by
\begin{align}\label{rrs}
	R_{\rho}(s)=\prod_{\gamma\in
	B}\det\(1-\Delta(\gamma)\rho(\gamma)e^{-s
	\ell_\gamma}\)^{(-1)^{\ind(\gamma)}}.
\end{align}
\end{defin}

\subsection{Franks' Morse function}\label{sMSMF}
 We follow \cite[Section 1]{Franks1979}. Let $\mathbb{D}^{r}$ be the
$r$-dimensional open unit ball of center $0\in
\mathbf{R}^{r}$.
 A fixed point $x\in A$ of index $p$ is said to be 
 of standard form if there 
 is a system of 
 coordinates $(y_{1},\ldots, y_{m})\in \mathbb{D}^{m}$ on a neighborhood
 of $x$ such that $x$ is represented by $0$ and
 \begin{align}
	 V=\sum_{i=1}^{p}y_{i}\frac{\p}{\p
	 y_{i}}-\sum_{i=p+1}^{m}y_{i}\frac{\p}{\p y_{i}}.
\end{align}

For closed orbits we must distinguish the following four cases  in
establishing the standard forms. Assume $\gamma\in B$ such that 
$\ind(\gamma)=p$. 
\begin{description}
	\item[Case (1)] Suppose that $TX|_{\gamma}$ is orientable and that  $\gamma$ is
	untwist. In this case, $\gamma$ is said to be of standard form, if there 	
	is a  system of coordinates $(t,y_{1},\ldots,
 y_{m-1})\in \mathbb{S}^{1}\times \mathbb{D}^{m-1}$  
 on a tubular neighborhood
	$U_{\gamma}$	of $\gamma$ such that $\gamma$ is represented by
 $(t,0)\in  \mathbb{S}^{1}\times \mathbb{D}^{m-1}$ and
\begin{align}\label{eqV}
	 V=\frac{\p}{\p t}+\sum_{i=1}^{p}y_{i}\frac{\p}{\p
	 y_{i}}-\sum_{i=p+1}^{m-1}y_{i}\frac{\p}{\p y_{i}}.
\end{align}
	\item[Case (2)] Suppose that $TX|_{{\gamma}}$ is orientable and 
	that   $\gamma$ is
	twist. In this case, $\gamma$ is said to be of standard form, if
	 $U_{\gamma}$ and $V$ can be obtained
	from Case (1) by  the identification
	\begin{align}\label{enid}
	(t,
	x_{1},\ldots,x_{m-1})\sim 	(t+1/2,
	-x_{1},x_{2},\ldots, x_{p}, -x_{p+1}, x_{p+2},\ldots,x_{m-1}).
	\end{align}	
	\item[Case (3)]  Suppose that   $TX|_{{\gamma}}$ is not orientable  and
 that  $\gamma$ is
	untwist. In this case, $\gamma$ is said to be of standard form, if
	 $U_{\gamma}$ and $V$ can be obtained
	from Case (1) by  the identification
	\begin{align}
	(t,
	x_{1},\ldots,x_{m-1})\sim 	(t+1/2,
	x_{1},\cdots,x_{p},-x_{p+1},x_{p+1},\ldots,x_{m-1}).	\end{align}	
	\item[Case (4)]  Suppose that  $TX|_{{\gamma}}$ is not orientable 
	and that 	 $\gamma$ is
	twist. In this case,  $\gamma$ is said to be of standard form, if
	 $U_{\gamma}$ and $V$ can be obtained
	from Case (1) by  the identification
	\begin{align}
		(t,
	x_{1},\ldots,x_{m-1})\sim (t+1/2,
	-x_{1},x_{2},\cdots,x_{m-1}).
	\end{align}
\end{description}
Note that in \cite[Section 1]{Franks1979} the author assumed that $X$ is orientable, so only the first two cases appear.

The following three propositions are  \cite[Proposition 1.6,
Theorem 2.2, Proposition 5.1]{Franks1979}.
Their proofs can be generalised to the non orientable case
with some evident modifications. We omit the details.


\begin{prop}\label{propV01}
	 For any Morse-Smale vector field $V$, there is a smooth family of Morse-Smale
	 vector fields $(V_{\ell})_{0\l \ell\l 1}$ such that $V_{0}=V$ and that the critical
	 elements of
	 $V_{1}$ are all of 
standard forms and are precisely the same as the critical
	 elements of $V$.  Moreover, $V_0$ and $V_1$ are topologically 
	 conjugated, i.e., there is a homeomorphism carrying the orbits 
	 of $V_0$ to those of $V_{1}$ and preserving their orientations.
\end{prop}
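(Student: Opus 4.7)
The plan is to follow Franks' argument \cite[Proposition 1.6]{Franks1979} for the orientable case, adding a short covering step to cover the non-orientable Cases (3) and (4). The strategy is local: first one places each critical element into its standard model on a small, pairwise disjoint tubular neighborhood; then one interpolates between $V$ and the standard forms by a compactly supported family; finally one uses openness of the Morse-Smale condition to ensure the family stays Morse-Smale for every $\ell$.

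For a hyperbolic fixed point $x\in A$ of index $p$, a Hartman--Grobman-type smoothing gives local coordinates $(y_{1},\ldots,y_{m})\in\mathbb{D}^{m}$ centered at $x$ in which $V$ is already $C^{1}$-close to the linear hyperbolic model $\sum_{i=1}^{p}y_{i}\p_{y_{i}}-\sum_{i=p+1}^{m}y_{i}\p_{y_{i}}$; a cut-off isotopy supported in a small ball brings $V$ into exact standard form without creating new zeros. For a closed orbit $\gamma\in B$, one uses the Floquet decomposition along $\gamma$, a Poincar\'e transversal, and hyperbolicity of the return map to obtain coordinates $(t,y)\in\bbS^{1}\times\mathbb{D}^{m-1}$. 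When $TX|_{\gamma}$ is orientable and $\gamma$ is untwist one is directly in Case (1), and an isotopy supported in a tubular neighborhood of $\gamma$ deforms $V$ to the model \eqref{eqV}. The other three configurations are reduced to Case (1) by passing to the relevant $\mathbf{Z}/2$-cover of $U_{\gamma}$ (the orientation double cover of $TX|_{\gamma}$ and/or the double cover detecting the orientability of $E^{u}_{\gamma}$), applying the orientable argument equivariantly, and descending through the corresponding quotient identification.

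Once the standard forms are in place on disjoint neighborhoods $U_{A}$ and $U_{B}$ of the critical elements, one glues them into a one-parameter family as follows. Pick a bump function $\chi$ equal to $1$ on a slightly smaller neighborhood and supported in $U_{A}\cup U_{B}$, let $V_{\mathrm{std}}$ denote the local standard model, and set
\begin{align}
V_{\ell}=(1-\ell\chi)V+\ell\chi V_{\mathrm{std}},\qquad 0\l\ell\l1.
\end{align}
By construction $V_{0}=V$, the field $V_{1}$ is in standard form near every critical element, and the zeros and closed orbits of $V_{\ell}$ coincide with those of $V$ for every $\ell$ since the modification is supported away from all other critical elements and only reshapes $V$ along its own orbit structure.

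The main obstacle, and the only point requiring care, is to ensure that $V_{\ell}$ is Morse-Smale for \emph{every} intermediate $\ell\in[0,1]$, not just at the endpoints. Hyperbolicity of each critical element persists because the perturbation can be made $C^{1}$-small by shrinking the supports of $\chi$. Transversality of the stable and unstable manifolds of distinct critical elements is an open condition in the $C^{1}$ topology, and these invariant manifolds depend $C^{1}$-continuously on $\ell$ on compact sets by the stable manifold theorem with parameters; hence, by shrinking the bump supports further if necessary, $V_{\ell}$ remains Morse-Smale along the whole path. The concluding topological conjugacy between $V_{0}$ and $V_{1}$, carrying orbits to orbits with preserved orientation, then follows from Palis--Smale structural stability of Morse-Smale flows applied to the continuous path $(V_{\ell})_{0\l\ell\l1}$.
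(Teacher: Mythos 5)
Your overall plan follows the right source, but the paper itself offers no written proof of this proposition: it invokes Franks \cite[Proposition 1.6]{Franks1979} directly and asserts that the non-orientable cases follow with ``evident modifications.'' So the real comparison is between your sketch and Franks' argument, and there your sketch has a genuine gap at the central point, namely why every intermediate $V_{\ell}$ is Morse-Smale.

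The gap is the claim that ``the perturbation can be made $C^{1}$-small by shrinking the supports of $\chi$,'' from which you deduce persistence of hyperbolicity and, via $C^{1}$-openness of the Morse-Smale condition, that the whole path stays Morse-Smale. Putting a fixed point into standard form forces the linearization at that point to have eigenvalues exactly $\pm 1$ (and similarly forces the Floquet data of a closed orbit into the model form). This is a change of the $1$-jet of the vector field at the critical element, and no perturbation that changes the $1$-jet at a point can be $C^{1}$-small, however small its support; shrinking the support only inflates the derivative bounds. Hence the appeal to openness of Morse-Smale fields in the $C^{1}$ topology does not apply along your path, and nothing in your argument rules out that some intermediate $V_{\ell}$ loses transversality or acquires new recurrence. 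Relatedly, Hartman--Grobman gives only a topological conjugacy, so the assertion that $V$ is ``already $C^{1}$-close to the linear hyperbolic model'' in suitable smooth coordinates is unjustified (smooth linearization needs non-resonance, and even then the linear part has the true eigenvalues of $DV$, not $\pm 1$). Finally, the convex combination $V_{\ell}=(1-\ell\chi)V+\ell\chi V_{\mathrm{std}}$ can a priori vanish, or create new recurrence, in the annulus where $0<\chi<1$; that the critical elements of $V_{\ell}$ coincide with those of $V$ is asserted but not proved. Franks avoids all of this by constructing $V_{1}$ so that the local stable and unstable manifolds at each critical element are preserved as sets, whence the global invariant manifolds and their transversal intersections are literally unchanged and the Morse-Smale property is verified directly rather than by an openness argument; the topological conjugacy is then assembled from the local conjugacies near critical elements together with the classification of hyperbolic linear flows up to topological conjugacy. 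Your reduction of Cases (2)--(4) to Case (1) via the relevant double covers is reasonable and is exactly what the paper means by ``evident modifications,'' but the interpolation step needs to be redone along Franks' lines.
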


\begin{re}
	The second part of  Proposition \ref{propV01} is a consequence of 
	the Structural stability  of the Morse-Smale flow \cite{Palis68,PalisSmale70}. 
\end{re}

\begin{re}\label{reVl}
	Following the proof of \cite[Proposition  	1.6]{Franks1979} 
	given by Franks, we can choose the family $(V_{\ell})_{0\l 
	\ell 		\l1}$ such that the critical elements are preserved 
	under the deformation. However, in the proof of our main result 
	Theorem \ref{thm1} given in Section \ref{S2}, we need only 
	choose a family such that all the set of the fixed points of 
	$V_{\ell}$ are in a small neighbourhood of the set of the fixed points of $V$. 
\end{re}

%


The relation between the Morse-Smale flow and the  Morse function is
summarised in the following two propositions. The first one is due to 
Smale \cite[Theorem B]{S61}.

\begin{prop}
	If $V$ is a Morse-Smale vector field whose flow has fixed points
	in standard form and no closed orbits, then $V$ is a  negative gradient of
a certain Morse function with respect to some Riemannian metric.
\end{prop}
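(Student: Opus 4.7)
The plan is to produce simultaneously a Morse function $f$ and a Riemannian metric $g^{TX}$ so that $V = -\nabla f$ (up to sign convention), by a combination of a Lyapunov function construction, a local normal form near fixed points, and a partition-of-unity gluing.

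First, I would build a smooth Lyapunov function $f : X \to \bR$ for the flow, with critical set exactly $A$, and whose local expression near each $x\in A$ of index $p$ in the standard coordinates $(y_1,\ldots,y_m)$ of Section \ref{sMSMF} is
\begin{align}
  f = f(x) + \frac{1}{2}\Bigl(-\sum_{i=1}^{p} y_i^{2} + \sum_{i=p+1}^{m} y_i^{2}\Bigr).
\end{align}
Because $V$ has no closed orbits and all recurrent behavior is concentrated at $A$, the chain-recurrent set is simply $A$. Smale's ordering $x \prec x'$ whenever $W^{u}_{x}\cap W^{s}_{x'}\neq\varnothing$ is then a genuine partial order on $A$; extending it to a total order and using the associated filtration by compact sublevel sets $X_0\subset X_1\subset\cdots\subset X_N=X$, with each $X_k\setminus X_{k-1}$ a handle attached along $W^{s}_{x_k}$, one produces an everywhere-defined smooth $f$ whose only critical points are the $x\in A$, that strictly decreases along orbits in $X\setminus A$, and that in a small neighborhood of each $x$ is quadratic as above. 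This is essentially a controlled version of Conley's Lyapunov function theorem for Morse decompositions.

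Next, I would build the Riemannian metric $g^{TX}$ to achieve $V = -\nabla f$. Near each $x\in A$ take $g^{TX}$ to be the Euclidean metric in the standard coordinates: a direct computation shows that the standard form of $V$ and the quadratic form of $f$ satisfy $V = -\nabla f$ locally. Away from $A$, $V$ and $df$ are both nonzero and $df(V) < 0$; the defining condition $g(V,\cdot) = -df(\cdot)$ is equivalent to
\begin{align}
  g(V,V) = -df(V), \qquad V \perp_g \ker(df),
\end{align}
with the metric on $\ker(df)$ otherwise free. Thus on each contractible open set $U \subset X\setminus A$ one can pick a smooth complement $H \subset \ker(df)|_U$ to $\bR V|_U$, any auxiliary positive-definite metric on $H$, and declare $V$ to be $g$-orthogonal to $H$ with the prescribed length. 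Using a partition of unity subordinate to a cover consisting of the standard coordinate charts near the points of $A$ together with such open sets, I would take a convex combination of these locally defined metrics; the condition $g(V,\cdot) = -df(\cdot)$ is affine in $g$, hence preserved under convex combinations, so the resulting $g^{TX}$ still satisfies $V = -\nabla f$ everywhere.

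The main obstacle is the construction of $f$ with the prescribed quadratic form near every fixed point: one has to ensure at the same time that $f$ is a global smooth function, strictly decreasing along non-fixed orbits, and that its local model around each $x\in A$ matches the standard form \eqref{eqV} for $V$. This requires exploiting the transversality and non-closed-orbit hypotheses to get the filtration (so the classical handle-attachment picture applies), and then smoothing the filtration-induced function to the exact quadratic model near each critical point, which can be done by a standard isotopy in Morse charts. Once $f$ is in hand, the metric construction is almost formal, and $V=-\nabla f$ then exhibits $V$ as a negative gradient, proving the proposition.
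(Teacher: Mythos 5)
The paper does not actually prove this proposition: it attributes it to Smale \cite{S61} (and to \cite[Theorem 2.2]{Franks1979}) and explicitly omits the details, so there is no in-paper argument to compare against. Your sketch is essentially the classical argument underlying those references: use the no-cycle property of the Morse--Smale order and the resulting filtration to build a smooth Lyapunov function $f$ with critical set $A$ and the quadratic model in the standard coordinates near each fixed point, and then recover the metric from the affine condition $g(V,\cdot)=-df(\cdot)$, which pins down $g$ on $\bR V$ (since $df(V)<0$ off $A$ forces $g(V,V)=-df(V)>0$ and $V\perp_{g}\ker df$), leaves $g|_{\ker df}$ free, and is preserved by partition-of-unity convex combinations. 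This is correct in outline. The one step that hides real work is making the global Lyapunov function coincide with the exact quadratic model near each $x\in A$ while keeping $df(V)<0$ on the transition annulus: for $f=(1-\chi)f_{0}+\chi q$ one gets
\begin{align*}
df(V)=(1-\chi)\,df_{0}(V)+\chi\,dq(V)+(q-f_{0})\,d\chi(V),
\end{align*}
and the last term can spoil the sign unless $f_{0}$ is first normalised near $x$ (this is exactly Meyer's energy-function construction). You identify this as the main obstacle and the fix is standard, so I would not count it as a gap; the proposal is an acceptable reconstruction of the cited proof.
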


To state the following  proposition, let us introduce some notation. 
For $x,y\in A$  such that $\ind(y)=\ind(x)-1$, then $W^{u}_{x}\cap W^{s}_{y}$
consists of a finite set $\Gamma(x,y)=\{a_{\cdot}\}$ of integral curves of
$V$ such that $a_{-\infty}=x$ and $a_{\infty}=y$ along
which $W^{u}_{x}$ and $W^{s}_{y}$ intersect transversally. Let us fix 
an orientation on each $W^{u}_{x}$ with $x\in A$. Define
$n(a)=\pm1$ as in  \cite[(1.28)]{BZ92}, whose definition does not 
require the manifold to be orientable. 

\begin{prop}\label{propfV}
	For some small neighborhood $U=\cup_{\gamma\in B}U_{\gamma}$ of 
	closed orbits $\cup_{\gamma\in B}\gamma$, there is
	a 	Morse function $f$ on $X$ whose gradient vector field $\nabla f$
	with respect to a certain Riemannian metric is
	Morse-Smale, such that 
	\begin{itemize}
		\item on $X\backslash U$, we have 
	\begin{align}\label{eqdf=V}
		-\nabla f=V,
	\end{align}
	\item on each $U_{\gamma}$, the Morse function  $f$ has only two  critical points $x_{\gamma},x'_{\gamma}$ of
 index $\ind(\gamma)+1$ and $\ind(\gamma)$ respectively.
	\end{itemize}
	  Also, $\Gamma(x_{\gamma},x'_{\gamma})$ consists of two
 integral curves  $a_{\gamma},a'_{\gamma}$ (see Figure \ref{fig:1})
 such that their composition 
 $a_{\gamma}\circ  (a'_{\gamma})^{-1}$ and the
 closed orbit $\gamma$ lie 
 in the same freely  homotopy class of
 loops on $X$ and  that\footnote{This 
	 requires a choice of the orientations on the unstable manifolds of 
	 $x_{\gamma},x_{\gamma}'$. Such choice is irrelevant.}
 \begin{align}\label{nana}
	 n(a_{\gamma})n(a'_{\gamma})=-\Delta(\gamma).
\end{align}
\end{prop}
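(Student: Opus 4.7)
The plan is to destroy each closed orbit of $V$ by a local surgery in its tubular neighborhood, replacing $\gamma$ by a pair of hyperbolic fixed points of consecutive indices, so that the resulting vector field has no closed orbits at all and is therefore a gradient by Smale's theorem. First I would apply Proposition \ref{propV01} to assume every $\gamma\in B$ is in standard form on a tubular neighborhood $U_\gamma$, so that $V|_{U_\gamma}$ has the explicit expression \eqref{eqV} (possibly after the half-period identifications in Cases (2)--(4)); in particular $V = \partial_t + V_{\mathrm{norm}}$, where $V_{\mathrm{norm}}$ is the Euclidean gradient of $q(y) = \tfrac{1}{2}\sum_{i=1}^p y_i^2 - \tfrac{1}{2}\sum_{i=p+1}^{m-1} y_i^2$. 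Choose a Morse function $g: \mathbb{S}^1\to\mathbb{R}$ with exactly one maximum at $t=0$ and one minimum at $t=1/2$. Inside a slightly smaller tube $U'_\gamma\subset U_\gamma$, replace $\partial_t$ by $g'(t)\partial_t$ and interpolate with a smooth cutoff in the annulus $U_\gamma\setminus U'_\gamma$; the resulting vector field $V'$ agrees with $V$ on $M\setminus U$.

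By construction $V'$ has two new hyperbolic fixed points in each $U_\gamma$: $x_\gamma = (0,0)$ of index $\ind(\gamma)+1$, where the maximum of $g$ contributes one extra unstable direction to the $p$ unstable normal directions, and $x'_\gamma=(1/2,0)$ of index $\ind(\gamma)$. A direct check in the normal form shows that $V'$ is Morse-Smale and has no closed orbits: any would have to lie near $\gamma$ and is destroyed by the tangential modification. Smale's theorem (the preceding proposition in Section \ref{sMSMF}) then realises $V'$ as the gradient of some Morse function $f$ for a suitable Riemannian metric, and a partition-of-unity argument lets one arrange this metric to coincide with any prescribed one on $M\setminus U$ that already makes $V|_{M\setminus U}$ a gradient, yielding $\nabla f = V$ there.

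It remains to identify the heteroclinic orbits and to establish \eqref{nana}. The integral curves of $V'$ connecting $x_\gamma$ to $x'_\gamma$ inside $W^u_{x_\gamma}\cap W^s_{x'_\gamma}$ are precisely the two arcs $a_\gamma, a'_\gamma$ of the central circle $\mathbb{S}^1\times\{0\}$ from $(0,0)$ to $(1/2,0)$, one in each sense of $t$; their concatenation $a_\gamma\circ(a'_\gamma)^{-1}$ traverses the central circle once and, after accounting for the identifications of Cases (2)--(4), is freely homotopic to $\gamma$. The main obstacle is the sign identity \eqref{nana}. Fix an orientation of $E^u_\gamma$ at $(0,0)$ and transport it along each arc to $x'_\gamma$; Bismut-Zhang's sign $n(a)$ is defined by comparing this transported orientation against a chosen orientation of $W^u_{x'_\gamma}$. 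In Case (1), $E^u_\gamma$ is preserved under transport around $\gamma$, so both arcs yield the same orientation at $x'_\gamma$ but enter along opposite senses of $\partial_t$, producing opposite signs and $n(a_\gamma)n(a'_\gamma) = -1 = -\Delta(\gamma)$. In Cases (2)--(4), the identifications \eqref{enid} and their analogues flip certain $y$-coordinates over a half-period, reversing the orientation of $E^u_\gamma$ precisely when $\gamma$ is twist, and combining this with the geometric sign of Case (1) gives $n(a_\gamma)n(a'_\gamma) = -\Delta(\gamma)$ in all four cases. This case-by-case orientation bookkeeping is the delicate part; the rest follows Franks' argument verbatim.
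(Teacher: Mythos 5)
Your proposal is correct and is essentially the paper's own route: the paper defers the proof to Franks but records exactly this surgery in Remark \ref{reK}, where the tangential coefficient $(1-\rho)+\rho\sin(2\pi t)$ is your $g'(t)$ interpolated by a cutoff, and your transport-of-orientation count giving $n(a_{\gamma})n(a'_{\gamma})=-\Delta(\gamma)$ is the right way to handle the four standard forms. Two small slips worth fixing: for the flow of $g'(t)\frac{\partial}{\partial t}$ a \emph{maximum} of $g$ is attracting in the $t$-direction, so with your choice $(0,0)$ has index $\ind(\gamma)$ and $(1/2,0)$ has index $\ind(\gamma)+1$ (swap the labels, or put the minimum at $t=0$ as Remark \ref{reK} effectively does with $\sin(2\pi t)$); and the Morse--Smale transversality of the new stable and unstable manifolds against those of the other critical elements is not a ``direct check in the normal form'' but requires the small generic perturbation that Franks performs.
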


\begin{figure}[htbp] 
\centering
\includegraphics[width=3.5in]{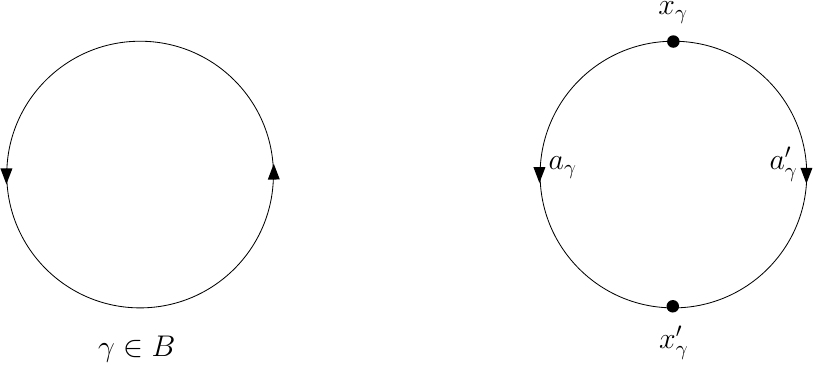} 
\caption{A closed orbit  and  integral curves }\label{fig:1} 
\end{figure}

\begin{re}\label{reK}We recall the essential step in the
	construction of the gradient $\nabla f$ given by Franks \cite[Proposition
	8.5]{Franks1982}. For simplicity, assume that	there is only one
	closed orbit 	$\gamma$ and it is of index $p$ and in standard 
	form of Case (1).
For $\delta>0$ small enough, let $\rho\in 
C^{\infty}_{c}(\mathbb{D}^{m-1},[0,1])$
be a cutoff function, which is equal to
$1$ on $|y|<\delta$ and to $0$ on $|y|>2\delta$. Put
\begin{align}
		V_{1}=\left\{
		\begin{array}{lr}
		\big((1-\rho)+\rho\sin\(2\pi t\)\big)\frac{\p}{\p t}+\sum_{i=1}^{p}y_{i}\frac{\p}{\p
	 y_{i}}-\sum_{i=p+1}^{m-1}y_{i}\frac{\p}{\p y_{i}}, &\hbox{ in
	 } 	 U_{\gamma},\\
	 V,& \hbox{ outside } U_{\gamma}.
		\end{array}\right.
	\end{align}
It is easy to see that the
	nonwandering set of $V_{1}$ in $U_{\gamma}$ consists of two
	points $(1,0),(\frac{1}{2},0)\in
	\mathbb{S}^{1}\times \mathbb{D}^{m-1}$. 
	Then, by a small perturbation on $V_{1}$,  we  get a Morse-Smale gradient vector
	field  $-\nabla f$ which
	has the desired transversality and other properties. 

We remark that by the above construction, we can find  a	family of vector fields $(V_{\e})_{0\l\e\l1}$ connecting $V$ and
	$-\nabla f$ such that near $\{1/4\}\times \mathbb{D}^{m-1}$, for
	any $ \e\in [0,1]$, $V_{\e}$ does not vanish.
	Similar remark holds for $\gamma$ in standard forms of Cases (2)-(4).
	In Section \ref{s33}, we will use this fact to simplify the proof of our main theorem.
\end{re}

\subsection{Smale filtration and Milnor metric}\label{sSF}
 Following \cite[Definition 9.10]{Franks1982},
let 
\begin{align}\label{eqSmale}
	\varnothing=X^{0}\subset X^{1}\subset\cdots \subset X^{N}=X
\end{align}	
be a Smale filtration on $X$ associated with $V$. Note that each  
$X^{p}\subset X$ is a submanifold with boundary, and can be constructed by the sublevel set  of a smooth Lyapunov function. Also, we 
have 
\begin{itemize}
	\item on each $\p X^{p}$, $V$ does not vanish and points toward the inside of $X^{p}$;
	\item  there is only one critical element $c\in A\coprod 
	B$ contained in $X^{p+1}\backslash 
	X^{p}$ and 
	\begin{align}\{c\}=\bigcap_{t\in \bR}\phi_{t}\(X^{p+1}\backslash 
	X^{p}\).
	\end{align} 
\end{itemize}

Let $(F,\nabla^{F})$ be a flat vector bundle on $X$ induced by the representation 
$\rho$. 
Let $H^{{\scriptscriptstyle\bullet}}(X,F)$ be the cohomology of the sheaf of locally constant
sections of $F$. Put
\begin{align}\label{eqlambda}
	\lambda=\det H^{{\scriptscriptstyle\bullet}}(X,F).
\end{align}
We use the notation in Section \ref{Fp}. By \eqref{FYYY}, we get an isomorphism 
 \begin{align}\label{eqSS}
 	\sigma_{V}:\bigotimes_{p=0}^{N-1}\det 
	H^{{\scriptscriptstyle\bullet}}(X^{p+1},X^{p},F)\simeq \lambda.
 \end{align}
By Proposition \ref{propFp}, up to a sign, the 
morphism $\sigma_{V}$ does not depend on the way that the 
cohomologies are fused. 

By \cite[Theorem 9.11]{Franks1982} (see also \cite[Section 2]{MorgadoMorseSmale}), if the critical element  $c\in 
X^{p+1}\backslash X^{p}$ is a fixed 
point, then 
\begin{align}\label{E1x}
	H^{q}\(X^{p+1},X^{p},F\)=\left\{ \begin{array}{ll}
	F_{c},& q=\ind (c),\\
	0,&\hbox{otherwise},
\end{array}\right.
\end{align}
and if  the critical element  $c\in 
X^{p+1}\backslash X^{p}$ is a closed orbit, then 
\begin{align}\label{E1g}
	H^{q}\(X^{p+1},X^{p},F\)=\left\{ \begin{array}{ll}
	H^{q-\ind (c)}\big(c,o(E^{u}_{c})\otimes F|_{c}\big),& q-\ind( c)=0 
	\hbox{ or }1,\\
	0,&\hbox{otherwise,}
\end{array}\right.
\end{align}
where $o(E_{c}^{u})$ is the orientation line bundle of $E^{u}_{c}$ along the 
closed orbit $c$.

We equip $\det 
	H^{{\scriptscriptstyle\bullet}}\(\gamma,o(E^{u}_{\gamma})\otimes 
	F|_{\gamma}\)$ with the metric $\|\cdot\|^{2}_{\det 
	H^{{\scriptscriptstyle\bullet}}\(\gamma,o(E^{u}_{\gamma})\otimes 
	F|_{\gamma}\)}$ defined  in \eqref{nsa}. 
Let $g^{F}$ be a Hermitian metric on $F$. By \eqref{eqSS}-\eqref{E1g}, the
restriction  $g^{F}|_{A}$ 
and  the metric $\|\cdot\|^{2}_{\det 
	H^{{\scriptscriptstyle\bullet}}\(\gamma,o(E^{u}_{\gamma})\otimes 
	F|_{\gamma}\)}$ induce a metric 
%
$\|\cdot\|^{M,2}_{\lambda,V}$ on
$\lambda$. By Proposition \ref{propFp},
this metric  does not depend on the way that the 
cohomologies are fused. 

\begin{defin}
	The  metric $\|\cdot\|^{M,2}_{\lambda,V}$ on
$\lambda$ is called the Milnor metric associated with 
$V$.
\end{defin}

\begin{re}
	If $V=-\nabla f$ is a negative  gradient of a Morse function $f$, then 
	$\|\cdot\|^{M,2}_{\lambda,V}$ coincides with the one constructed by 
	Bismut-Zhang \cite[Definition 1.9]{BZ92}. 
	In fact, there is a small difference with Bismut-Zhang's 
	construction, where  they used a filtration \cite[(1.37)]{BZ92} induced	
	by sublevel sets of a nice 	Morse function.  Using Proposition 
	\ref{propFp}, we can deduce  that the two constructions coincide. 
\end{re}

\begin{re}\label{reDMA}
	For two topologically conjugated Morse-Smale vector fields whose 
	critical elements coincide, we can choose the same Smale filtration. From our construction,  the corresponding Milnor metrics coincide. 
\end{re}

\begin{re}
	The Milnor metric for 
	general Morse-Smale flow 	does not depend on the Smale 
filtration \eqref{eqSmale}. We will not give a direct proof  since it is a consequence of our main Theorem \ref{thm1}.
\end{re}

Let us restate and prove Proposition \ref{prop}. 

\begin{prop}\label{Prop}
	If $V$ does not have any fixed points, and 
if none of $\Delta(\gamma)$ is an  eigenvalue of $\rho(\gamma)$, then 
$H^{{\scriptscriptstyle\bullet}}(X,F)=0$, and the norm of the canonical section 
$\mathbf{1}\in \bC=\det H^{{\scriptscriptstyle\bullet}}(X,F)$ is given by 
\begin{align}\label{eqReMi}
	\|\mathbf{1}\|^{M}_{\lambda,V}=|R_{\rho}(0)|^{-1}. 
\end{align}
\end{prop}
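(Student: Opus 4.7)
The plan is to exploit the Smale filtration \eqref{eqSmale} and reduce everything to the circle model of Section \ref{sCS1}. Because $V$ has no fixed points, every critical element $c\in X^{p+1}\setminus X^{p}$ is a closed orbit $\gamma_{p}$, and \eqref{E1g} canonically identifies $H^{\cdot}(X^{p+1},X^{p},F)$, up to the degree shift by $\ind(\gamma_{p})$, with the cohomology of the flat bundle $o(E^{u}_{\gamma_{p}})\otimes F|_{\gamma_{p}}$ on $\gamma_{p}\simeq\bbS^{1}$. By the very definition \eqref{Delta} of the twist indicator, the holonomy of this bundle around $\gamma_{p}$ equals $\Delta(\gamma_{p})\rho(\gamma_{p})$. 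The hypothesis that $\Delta(\gamma)$ is not an eigenvalue of $\rho(\gamma)$ is therefore equivalent to $1$ not being an eigenvalue of $\Delta(\gamma_{p})\rho(\gamma_{p})$, so the model complex \eqref{CS1} with $A=\Delta(\gamma_{p})\rho(\gamma_{p})$ is acyclic. Hence $H^{\cdot}(X^{p+1},X^{p},F)=0$ for every $p$, and induction on $p$ via the long exact sequence \eqref{Les} for the triples $(\varnothing,X^{p},X^{p+1})$ propagates the vanishing from $X^{0}=\varnothing$ to $X^{N}=X$, giving $H^{\cdot}(X,F)=0$ as required.

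Next I would compute the Milnor norm of $\mathbf{1}$. Because every line entering $\sigma_{V}$ in \eqref{eqSS} is acyclic and therefore canonically trivialised by $\mathbf{1}$, and because the fusion isomorphisms preserve the canonical element up to a unit modulus phase (Proposition \ref{propFp}, whose phase is irrelevant for norms), the metric factorises as
\begin{align*}
\|\mathbf{1}\|^{M}_{\lambda,V}=\prod_{p=0}^{N-1}\|\mathbf{1}\|_{\det H^{\cdot}(X^{p+1},X^{p},F)}.
\end{align*}
Transporting each factor via \eqref{E1g} onto $\bigl(\det H^{\cdot}(\gamma_{p},o(E^{u}_{\gamma_{p}})\otimes F|_{\gamma_{p}})\bigr)^{(-1)^{\ind(\gamma_{p})}}$, applying the model formula \eqref{sa} with $A=\Delta(\gamma_{p})\rho(\gamma_{p})$, and recalling that inverting a trivialised line inverts the norm of the canonical $\mathbf{1}$, I would obtain
\begin{align*}
\|\mathbf{1}\|_{\det H^{\cdot}(X^{p+1},X^{p},F)}=\bigl|\det\bigl(1-\Delta(\gamma_{p})\rho(\gamma_{p})^{-1}\bigr)\bigr|^{-(-1)^{\ind(\gamma_{p})}}.
\end{align*}
Multiplying over $p$ and comparing with the definition \eqref{rrs} of $R_{\rho}$ at $s=0$ yields \eqref{eqReMi}.

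The step requiring the most care, rather than any genuine obstacle, is the correct identification of the holonomy of $o(E^{u}_{\gamma})\otimes F|_{\gamma}$ along $\gamma$ as $\Delta(\gamma)\rho(\gamma)$, together with the bookkeeping of the exponents $(-1)^{\ind(\gamma)}$ arising from the degree shift in \eqref{E1g} and from the parity in the definition of $\det E^{\cdot}$. Once these signs are tracked honestly, the identity drops out by direct comparison with \eqref{rrs}; the phase ambiguity coming from Proposition \ref{propFp} is harmless, since only moduli appear on both sides of \eqref{eqReMi}.
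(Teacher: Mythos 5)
Your proposal is correct and follows essentially the same route as the paper, whose proof simply cites the long exact sequence \eqref{Les}, the computations \eqref{E1x}--\eqref{E1g}, the circle model \eqref{sa}, the definition \eqref{rrs}, and the fusion isomorphism \eqref{eqSS}; you have merely spelled out the details (the identification of the holonomy of $o(E^{u}_{\gamma})\otimes F|_{\gamma}$ as $\Delta(\gamma)\rho(\gamma)$ and the sign bookkeeping from the degree shift) that the paper leaves implicit.
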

\begin{proof}
	For $\gamma\in B$, the holonomy of $o(E_{\gamma}^{u})\otimes 
	F|_{\gamma}$ 
	along $\gamma$ is $\Delta(\gamma)\rho(\gamma)$. By our 
	assumption, 
\begin{align}\label{eqDMA}
H^{{\scriptscriptstyle\bullet}}(\gamma,o(E_{\gamma}^{u})\otimes 
F_{\gamma})=0.
\end{align} 
	By \eqref{Les}, \eqref{E1x}, \eqref{E1g}, and \eqref{eqDMA}, we can deduce  that  $H^{{\scriptscriptstyle\bullet}}(X,F)=0$. By 
	\eqref{sa}, \eqref{rrs}, and \eqref{eqSS}, we get 
	\eqref{eqReMi}. 
\end{proof}

\subsection{A comparison formula for  Milnor metrics}\label{sCMM}
In this section, we assume that all the critical elements of
$V$ are in standard forms, and that $f$ is chosen as in
Proposition 	\ref{propfV}.  

Let $\det \tau({a'_{\gamma}})\in \det
		F_{x'_{\gamma}}\otimes \(\det
		F_{x_{\gamma}}\)^{-1}$ be the canonical element induced by the
	parallel transport  with respect to the flat connection	along the 
	integral curve $a_{\gamma}'$ (see Figure \ref{fig:1}). Let $\|\cdot\|^{2}_{\det
		F_{x'_{\gamma}}\otimes \(\det
		F_{x_{\gamma}}\)^{-1}}$ be the metric on $\det
		F_{x'_{\gamma}}\otimes \(\det
		F_{x_{\gamma}}\)^{-1}$ induced by $g^{F}_{x_{\gamma}}$ and
		$g^{F}_{x'_{\gamma}}$. 

\begin{prop}\label{propMM}
	The following identity holds,	
	\begin{align}\label{eqMM}
		\log\(\frac{\|\cdot\|^{M,2}_{\lambda,-\nabla
		f}}{\|\cdot\|^{M,2}_{\lambda,V}}\)=\sum_{\gamma\in
		B}(-1)^{\ind(\gamma)}\log
		\left\|\det\tau(a_{\gamma}')\right\|^{2}_{\det
		F_{x'_{\gamma}}\otimes \(\det
		F_{x_{\gamma}}\)^{-1}}.
	\end{align}
\end{prop}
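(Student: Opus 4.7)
My plan is to reduce the comparison of the two Milnor metrics on $\lambda$ to a sum of local comparisons indexed by the closed orbits $\gamma \in B$, exploiting the fact that outside small tubular neighbourhoods $U_\gamma$ the vector fields $V$ and $\nabla f$ coincide (Remark \ref{reK}). Starting with a Smale filtration $\varnothing = X^0 \subset \cdots \subset X^N = X$ adapted to $V$, I would refine it into a Smale filtration adapted to $\nabla f$ by inserting, in the piece $X^{p+1}\setminus X^p$ containing each closed orbit $\gamma$, an intermediate submanifold $Y_\gamma$ containing the lower-index critical point $x'_\gamma$ but not $x_\gamma$. By the fusion principle (Proposition \ref{propFp}), the two Milnor metrics agree tautologically on every piece of the common refinement not involving a closed orbit, so only the pieces containing closed orbits contribute to the discrepancy.

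For a single closed orbit $\gamma$ of index $q = \ind(\gamma)$, the long exact sequence of the triple $(X^p, Y_\gamma, X^{p+1})$ combined with \eqref{E1x}--\eqref{E1g} collapses into a four-term exact sequence
\begin{equation*}
0 \to H^q(X^{p+1},X^p, F) \to F_{x'_\gamma} \xrightarrow{\delta} F_{x_\gamma} \to H^{q+1}(X^{p+1}, X^p, F) \to 0,
\end{equation*}
in which the outer cohomologies together form $H^\cdot(\bbS^1, o(E^u_\gamma) \otimes F|_\gamma)$ shifted by $q$. The key step is to identify the connecting homomorphism $\delta$ with the $\nabla f$-Morse differential on the two critical points $\{x'_\gamma, x_\gamma\}$, namely
\begin{equation*}
\delta \;=\; n(a_\gamma)\, \tau(a_\gamma)^{-1} + n(a'_\gamma)\, \tau(a'_\gamma)^{-1},
\end{equation*}
where $\tau(a)^{-1}:F_{x'_\gamma} \to F_{x_\gamma}$ denotes inverse parallel transport along the integral curve $a$. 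Using the identification $F_{x_\gamma} \simeq F_{x'_\gamma}$ via $\tau(a'_\gamma)$, the composition $\tau(a'_\gamma) \tau(a_\gamma)^{-1}$ equals the holonomy $\rho(\gamma)^{-1}$ around the loop $a'_\gamma \cdot a_\gamma^{-1}$, which is freely homotopic to $\gamma$ by Proposition \ref{propfV}; combined with \eqref{nana} this yields $\tau(a'_\gamma)\,\delta = \pm(1 - A^{-1})$, and hence
\begin{equation*}
|\det \delta| \;=\; |\det(A^{-1}-1)| \cdot |\det \tau(a'_\gamma)|^{-1},
\end{equation*}
where $A = \Delta(\gamma) \rho(\gamma)$ is the holonomy of $o(E^u_\gamma) \otimes F|_\gamma$ along $\gamma$.

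To conclude I would compare the two metrics on the local determinant line. On the $V$-side, $\det H^\cdot(X^{p+1}, X^p, F) \cong [\det H^\cdot(\bbS^1, o \otimes F|_\gamma)]^{(-1)^q}$ is normalised by $\|\sigma_A\| = 1$ with $\sigma_A = \det(A^{-1}-1) \cdot \mathbf{1}$ by \eqref{sa}, so $\|\mathbf{1}\|^V = |\det(A^{-1}-1)|^{(-1)^{q+1}}$. On the $\nabla f$-side, the fused line $(\det F_{x'_\gamma})^{(-1)^q} \otimes (\det F_{x_\gamma})^{(-1)^{q+1}}$ carries the metric induced by $g^F$, and the Knudsen--Mumford isomorphism \eqref{detCdetH} sends its orthonormal generator to $(\det \delta)^{(-1)^q}$ times the canonical generator. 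Computing the ratio of norms and taking the logarithm produces the per-orbit contribution $(-1)^q \log \|\det \tau(a'_\gamma)\|^2$, and summing over $\gamma \in B$ gives \eqref{eqMM}. The main obstacle is the precise identification of the abstract connecting map $\delta$ with the explicit Morse-theoretic expression above, requiring a careful book-keeping of sign and orientation conventions (direction of parallel transport, sign of $n(a)$, and homotopy class of $a_\gamma \cdot (a'_\gamma)^{-1}$) so that the holonomy factor $\Delta(\gamma)$ coming from \eqref{nana} matches the definition of $A$.
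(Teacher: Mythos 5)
Your proposal is correct and follows essentially the same route as the paper: the paper likewise refines the Smale filtration by the new critical points $x_\gamma,x'_\gamma$ of $f$, localises the comparison to the closed orbits via the fusion principle, and identifies the two-term Morse complex on $\{x'_\gamma,x_\gamma\}$ with the circle cochain complex \eqref{CS1} through the commutative diagram \eqref{eqDia}. Your long-exact-sequence computation of the connecting map $\delta$ is just a more explicit unpacking of that diagram, and the sign/orientation bookkeeping you flag is exactly the content the paper delegates to Propositions \ref{propFp} and \ref{propfV}.
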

\begin{proof}
	We refine the filtration \eqref{eqSmale} by the new critical 
	points of $f$. By Propositions \ref{propFp} and \ref{propfV}, and by 
	\eqref{eqSS}, the following diagram commutes
	 \begin{align}\label{eqDia}
		\begin{aligned}
			\xymatrix{
	\bigotimes_{x\in A}  \big(\!\det F_{x}\big)^{ (-1)^{\ind (x)}} 
		\bigotimes_{\gamma\in B} \left\{\det
		F_{x'_{\gamma}}\otimes \(\det
		F_{x_{\gamma}}\)^{-1}\right\}^{(-1)^{\ind (\gamma)}}
		\ar[d] 
		\ar[r]^-{\sigma_{-\nabla f}} &\lambda\ar[d] 
\\
		\bigotimes_{x\in A}\!\big(\!\det 
F_{x}\big)^{(-1)^{\ind (x)}} \!
	\bigotimes_{\gamma \in B}\Big\{\!\det 
	H^{{\scriptscriptstyle\bullet}}\(\gamma,o(E^{u}_{\gamma})\otimes 
	F|_{\gamma}\)\!\Big\}^{(-1)^{\ind 
	(\gamma)}}\ar[r]^-{\sigma_V}         &\lambda}	
	\end{aligned}
	\end{align}
%
%
where the first vertical arrow  is induced by \eqref{eqsigmaa} and 
the second vertical arrow is a multiplication  by $\pm 1$. %
	The Milnor metric $\|\cdot\|_{\lambda,-\nabla f}^{M,2}$ is
	obtained   from the metric $g^{F}|_{A \cup 
	\{x_{\gamma},x'_{\gamma}:\gamma\in B\}}$ via $\sigma_{-\nabla f}$. 
%
%
	By \eqref{eqDia}, we get  
	\eqref{eqMM}.
\end{proof}

\section{An extension of  Bismut-Zhang's Theorem to Morse-Smale 
flow}\label{S2}
This section is organised as follows. In Sections
\ref{sec:Bere}-\ref{sMa},  following \cite{BZ92}, we recall the 
constructions of the Berezin integral, the  Mathai-Quillen current, and the Ray-Singer metric. 
In Section \ref{s33}, we restate and prove our main theorem.  
\subsection{Berezin integral}\label{sec:Bere}
Let $E$ be a real Euclidean space of dimension $n$ with
the scalar product $\<,\>$, and let $W$ be a real vector space of 
finite dimension. We use the supersymmetric formalism of Quillen 
\cite{Quillensuper}. Denote by $\widehat{\otimes}$ the tensor product of super algebras.

Suppose temporarily that $E$ is oriented and that $e_1,\ldots, e_n$ 
is an oriented orthonormal  basis of $E$. Let $e^1,\ldots,e^n$ be the corresponding dual basis of $E^*$. We define $\int^B$ to be the linear map from $\Lambda^{\scriptscriptstyle\bullet}(W^*)\widehat{\otimes}\Lambda^{\scriptscriptstyle\bullet}(E^*)$ into $\Lambda^{\scriptscriptstyle\bullet}(W^*)$, such that if $\alpha\in \Lambda^{\scriptscriptstyle\bullet}(W^*), \beta\in \Lambda^{\scriptscriptstyle\bullet}(E^*)$,
\begin{align}\label{eq:defBere}
& \int^B\alpha\beta=0, \hbox{ if } \deg \beta<n,&\int^B\alpha 
e^1\wedge\cdots\wedge e^n=\frac{(-1)^{n(n+1)/2}}{\pi^{\frac{n}{2}}}\alpha.
\end{align}
More generally, if $o(E)$ is the orientation line of $E$, then $\int^B$ defines a linear map from $\Lambda^{\scriptscriptstyle\bullet}(W^*)\widehat{\otimes}\Lambda^{\scriptscriptstyle\bullet}(E^*)$ into $\Lambda^{\scriptscriptstyle\bullet}(W^*)\widehat{\otimes} o(E)$, which is called a Berezin integral.

Let $A\in \End^{\mathrm{anti}}(E)$ be an antisymmetric endomorphism of $E$. We identify $A$ with
\begin{align}\label{eq:ALam}
 \dot{A}=\frac{1}{2}\sum_{1\l i,j\l n}\<e_i,Ae_j\>e^i\wedge e^j\in \Lambda^2(E^*).
\end{align}
By definition, the Pfaffian $\mathrm{Pf}[A]\in o(E)$ of $A$ is given by
\begin{align}\label{eq:PfA}
\mathrm{Pf}\[\frac{A}{2\pi}\]=\int^B\exp\(-\frac{\dot{A}}{2}\).
\end{align}
Clearly, $\mathrm{Pf}[A]$ vanishes if $n$ is odd.

\subsection{Mathai-Quillen formalism }\label{sMQ}
Recall that  $X$ is a connected closed smooth    manifold of dimension $m$. Let 
$E$ be a Euclidean vector bundle of rank $n$ on  $X$ with a Euclidean 
metric $g^{E}$ and a metric connection  $\nabla^{E}$. Let $R^{E}=\(\nabla^{E}\)^{2}\in 
\Omega^{2}(X,\End^{\rm anti}(E))$ be its curvature.  Denote by $o(E)$  the orientation line
bundle of $E$. 
The Euler form of 
$(E,\nabla^{E})$ is given by 
\begin{align}
	e\(E,\nabla^{E}\)=\mathrm{Pf}\[\frac{R^{E}}{2\pi}\] \in 
	\Omega^{n}(X,o(E)). 
\end{align}
Clearly, $e\(E,\nabla^{E}\)=0$,  if $n$ is odd.

Let $\cE$ be the total space of $E$, and let $\pi: \cE\to X$ be the
natural projection. We will use the formalism of the Berezin integral developed in 
Section \ref{sec:Bere} with $W = T\cE$. If $\omega$ is a smooth section of  
 $\Lambda^{{\scriptscriptstyle\bullet}}(T^{*}\cE)\otimes\pi^{*}\Lambda^{{\scriptscriptstyle\bullet}}(E^{*})$ 
 over $\cE$, then $\int^{B}\omega$ is a smooth section of $\Lambda^{{\scriptscriptstyle\bullet}} 
 (T^{*} \cE) 
 \otimes \pi^{*}o(E)$ over $\cE$.

Let $T^{V}\cE\subset T\cE$ be the vertical subbundle of $T\cE$, and 
let $T^{H}\cE\subset T\cE$ be the horizontal  subbundle of $T\cE$ 
with respect to $\nabla^{E}$, so that 
\begin{align}\label{splitting}
	T\cE=T^{H}\cE\oplus  T^{V}\cE. 
\end{align}
By the identification 
$T^{V}\cE\simeq \pi^{*}E$, 
the 
vertical projection with 
respect to the splitting \eqref{splitting} induces a section 
$P^{E}\in C^{\infty}(\cE,T^{*}\cE \otimes \pi^{*}E)$. Using the metric $g^{E}$, we identify $P^{E}$ with $\dot{P}^{E}\in C^{\infty}(\cE,T^{*}\cE \otimes \pi^{*}E^{*})$. 
Let $Y\in C^{\infty}(\cE,\pi^{*}E)$ be the tautological section. 
Write $\widehat{Y}$ the corresponding  section  in 
$C^{\infty}(\cE,\pi^{*}E^{*})$ induced by  $g^{E}$. 
Recall that $\dot{R}^{E}\in C^{\infty}(X,\Lambda^{2}(T^{*}X)\otimes \Lambda^{2}(E^{*}))$ is 
defined in \eqref{eq:ALam}.

\begin{defin}
	For 
$T\g 0$, set
\begin{align}
	A_{T}=\frac{1}{2}\pi^{*}\dot{R}^{E}+\sqrt{T}\dot{P}^{E}+T|Y|^{2}\in C^{\infty}\big(\cE,\Lambda^{{\scriptscriptstyle\bullet}}(T^{*}\cE)\otimes \pi^{*}\Lambda^{{\scriptscriptstyle\bullet}}(E^{*})\big). 
\end{align}
	Let $(\alpha_{T})_{T\g0}$ and $(\beta_{T})_{T>0}$ be  families  of forms on $\cE$ defined 
	by 
	\begin{align}\label{eqDang1}
		\begin{aligned}
		&\alpha_{T}=\int^{B}\exp\(-A_{T}\)\in 
		\Omega^{n}(\cE,\pi^{*}o(E)),\\
		&\beta_{T}=\int^{B}\frac{\widehat{Y}}{2\sqrt{T}}\exp\(-A_{T}\)\in \Omega^{n-1}(\cE,\pi^{*}o(E)). 
		\end{aligned}
	\end{align}
\end{defin}
Clearly,
\begin{align}
	\alpha_{0}=\pi^{*}e\(E,\nabla^{E}\).
\end{align}
Let us recall \cite[Theorems 3.4, 3.5, and 3.7]{BZ92}. 
\begin{thm}
For $T\g 0$, the form $\alpha_{T}$ is closed whose  cohomology class 
does not depend on $T$.
For $T>0$,  $\alpha_{T}$ represents the Thom 
class of $E$ so that $\pi_{*}\alpha_{T}=1$, and  we have 
\begin{align}
&\frac{\p \alpha_{T}}{\p T}=-d\beta_{T}. 
\end{align}
\end{thm}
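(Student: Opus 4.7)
The plan is to carry out the standard Mathai-Quillen computation. The key structural fact is that the Berezin integral $\int^{B}$ intertwines the covariant exterior differential $\bar{\nabla}$ on $\Lambda^{\cdot}(T^{*}\cE)\widehat{\otimes}\pi^{*}\Lambda^{\cdot}(E^{*})$ induced by $d^{\cE}$ on the base factor together with $\pi^{*}\nabla^{E}$ on the fibre factor, with the exterior differential on $\cE$: one has $d^{\cE}\circ\int^{B}=\int^{B}\circ\bar{\nabla}$, as maps into $\Omega^{\cdot}(\cE,\pi^{*}o(E))$. Once this intertwining is in place, the three claims of the theorem reduce to algebraic identities for $A_{T}$ in the graded-commutative algebra $\Omega^{\cdot}(\cE,\pi^{*}\Lambda^{\cdot}(E^{*}))$.

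First I would establish closedness by computing $\bar{\nabla}A_{T}$ piece by piece. The curvature term $\frac{1}{2}\pi^{*}\dot{R}^{E}$ is $\bar{\nabla}$-closed by the second Bianchi identity $\nabla^{E}R^{E}=0$. Using that $Y\in C^{\infty}(\cE,\pi^{*}E)$ is the tautological section with $(\pi^{*}\nabla^{E})Y=P^{E}$ and the structure equation $(\pi^{*}\nabla^{E})^{2}=\pi^{*}R^{E}$ acting via Leibniz on $\pi^{*}\Lambda^{\cdot}E^{*}$, one derives identities for $\bar{\nabla}\dot{P}^{E}$, $\bar{\nabla}\widehat{Y}$, and $\bar{\nabla}|Y|^{2}$. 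Assembling these contributions one verifies that $\bar{\nabla}A_{T}$ equals a combination whose product with $e^{-A_{T}}$ projects to zero under $\int^{B}$. Since $A_{T}$ is even, $\bar{\nabla}e^{-A_{T}}=-\bar{\nabla}A_{T}\cdot e^{-A_{T}}$, and applying $\int^{B}$ yields $d^{\cE}\alpha_{T}=0$.

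For the Thom class representation I would fix $T>0$ and compute $\pi_{*}\alpha_{T}$ pointwise over $x\in X$. Choose an orthonormal frame of $E_{x}$ extended by parallel transport in a small disc around $x$ so that the connection one-forms vanish at $x$; let $y_{1},\dots,y_{n}$ denote the corresponding fibre coordinates. At $x$ the curvature terms in $A_{T}$ contribute only base-directed forms which drop out of the fibre integration, and the integrand along the fibre reduces to $\int^{B}\exp\bigl(-\sqrt{T}\sum_{i}dy^{i}\wedge e^{i}-T\sum_{i}y_{i}^{2}\bigr)$. Expanding the exponential in the nilpotent variables $e^{i}$ selects the top factor $e^{1}\wedge\cdots\wedge e^{n}$; using \eqref{eq:defBere} together with the standard Gaussian identity $\int_{\mathbf{R}^{n}}T^{n/2}e^{-T|y|^{2}}dy=\pi^{n/2}$ then produces $\pi_{*}\alpha_{T}=1$.

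For the transgression formula, compute $\partial_{T}A_{T}=\frac{1}{2\sqrt{T}}\dot{P}^{E}+|Y|^{2}$ and rewrite it, via the same structure equations used above, as $-\bar{\nabla}\bigl(\frac{\widehat{Y}}{2\sqrt{T}}\bigr)$ modulo terms that commute with $A_{T}$ and vanish under $\int^{B}\exp(-A_{T})$. A direct manipulation then yields $\partial_{T}e^{-A_{T}}=-\bar{\nabla}\bigl(\frac{\widehat{Y}}{2\sqrt{T}}e^{-A_{T}}\bigr)$; applying $\int^{B}$ and the intertwining property produces $\partial_{T}\alpha_{T}=-d\beta_{T}$, and $T$-independence of the cohomology class follows by integrating from $T$ to $T'$. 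The main obstacle is bookkeeping: tracking $P^{E}$ as a one-form with values in $\pi^{*}E$, handling signs in the super-commutative algebra $\Omega^{\cdot}(\cE,\pi^{*}\Lambda^{\cdot}(E^{*}))$, and keeping straight the $\pi^{-n/2}$ normalisation of $\int^{B}$. Once these are managed carefully, all three assertions follow from essentially the same computation.
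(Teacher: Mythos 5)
The paper does not actually prove this theorem: it is quoted verbatim from \cite[Theorems 3.4, 3.5, and 3.7]{BZ92}, whose proof is precisely the Mathai--Quillen computation you outline, so your approach matches the source and is correct in outline. The only point worth sharpening is that both closedness and the transgression rest on the single identity $(\bar\nabla+2\sqrt{T}\,\iota_{Y})\exp(-A_{T})=0$ combined with the fact that $\int^{B}\circ\,\iota_{Y}=0$ (interior multiplication can never produce a top-degree component in $\Lambda^{\cdot}(E^{*})$), which is the precise mechanism behind your statement that certain terms ``project to zero under $\int^{B}$.''
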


We identify $X$ as a submanifold of $\cE$ by the zero section.  The normal bundle 
to  $X$ in $\cE$ is exactly $E$ and the conormal bundle is $E^{*}$.  
Let $\delta_{X}$ be the current on $\cE$ defined by  the integration  
on $X$. If $\mu$ is a smooth compactly supported form on $\cE$ with 
values in $\pi^{*}o(TX)$, then 
\begin{align}
	\int_{\cE}\mu\delta_{X}=\int_{X}\mu.
\end{align} 
For a current $v$ on $\cE$, denote by $\mathrm{WF}(v)\subset 
T^{*}\cE$ its wave front set \cite[Section 8.1]{HormanderBook1}.

\begin{thm}
Let $K\subset 
\cE$ be a compact subset of $\cE$. There is $C_{K}>0$ such that  for 
any $\mu\in 
\Omega^{{\scriptscriptstyle\bullet}}(\cE,\pi^{*}o(TX))$ whose support is contained in $K$ 
and for any $T\g 1$, we have
\begin{align}\label{eqabT}
&\left|	\int_{\cE}\mu(\alpha_{T}-\delta_{X})\right|\l 
\frac{C_{K}}{\sqrt{T}}\|\mu\|_{C^{1}},&\left|\int_{\cE}\mu \beta_{T}\right|\l 
\frac{C_{K}}{T^{3/2}}\|\mu\|_{C^{1}},
\end{align}	
where $\|\cdot\|_{C^1}$ denotes the $C^{1}$-norm. 
\end{thm}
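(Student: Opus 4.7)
The plan is to exploit the Mathai--Quillen scaling. Consider the fibrewise dilation $\phi_T\colon\cE\to\cE$ defined in a local trivialization by $(x,y)\mapsto(x,\sqrt T\,y)$. Since $\phi_T$ covers the identity on $X$, it leaves $\pi^*\dot R^E$ invariant; since $Y$ is the tautological section, $\phi_T^*|Y|^2=T|Y|^2$; and since $P^E$ is the vertical projection onto $\pi^*E$, the fibrewise dilation gives $\phi_T^*\dot P^E=\sqrt T\,\dot P^E$. Hence $\phi_T^*A_1=A_T$. As the Berezin integral acts only on the $\pi^*\Lambda^\cdot(E^*)$ factor, which is unaffected by $\phi_T^*$, we obtain $\phi_T^*\alpha_1=\alpha_T$, while the extra factor $\widehat{Y}/(2\sqrt T)$ in $\beta_T$ yields $\phi_T^*\beta_1=T\beta_T$. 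Since $\phi_T$ is an orientation-preserving diffeomorphism of $\cE$, change of variables on top forms then gives
\begin{align*}
\int_\cE u\,\alpha_T=\int_\cE(\phi_T^{-1})^*u\cdot\alpha_1,\qquad \int_\cE u\,\beta_T=\frac{1}{T}\int_\cE(\phi_T^{-1})^*u\cdot\beta_1.
\end{align*}

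Next I would compare $(\phi_T^{-1})^*u$ with $\pi^*\iota^*u$, where $\iota\colon X\hookrightarrow\cE$ denotes the zero section. Writing $u$ locally as $\sum_{I,J}u_{IJ}(x,y)\,dx^I\wedge dy^J$, the pullback $(\phi_T^{-1})^*$ sends $y\mapsto y/\sqrt T$ and $dy^j\mapsto dy^j/\sqrt T$, so the pieces with $|J|\g 1$ acquire a direct factor $T^{-|J|/2}$, while the $|J|=0$ piece is Taylor expanded in the fibre variable as $u_{I0}(x,y/\sqrt T)=u_{I0}(x,0)+O(\|u\|_{C^1}|y|/\sqrt T)$. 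This yields the pointwise estimate
\begin{align*}
\bigl|(\phi_T^{-1})^*u(x,y)-\pi^*\iota^*u(x,0)\bigr|\l \frac{C(1+|y|)}{\sqrt T}\,\|u\|_{C^1}.
\end{align*}
The leading term is identified via the Thom-form property $\pi_*\alpha_1=1$, giving $\int_\cE\pi^*\iota^*u\cdot\alpha_1=\int_X\iota^*u=\int_\cE u\,\delta_X$, and via the trivial degree count $\pi_*\beta_1=0$ (since $\beta_1$ has total degree $n-1$ on $\cE$, no component carries pure fibre degree $n$), giving $\int_\cE\pi^*\iota^*u\cdot\beta_1=0$. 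Combining these with the pointwise bound and the two change-of-variable identities produces the two estimates in \eqref{eqabT} with rates $T^{-1/2}$ and $T^{-1}\cdot T^{-1/2}=T^{-3/2}$, respectively.

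The main technical obstacle will be the uniformity of the constant $C_K$: after the change of variable, the fibrewise support of $(\phi_T^{-1})^*u$ blows up like $\sqrt T$, so one has to justify that integrals such as $\int_\cE(1+|y|)|\alpha_1|$ and $\int_\cE(1+|y|)|\beta_1|$ taken over $\pi^{-1}(\pi(K))$ are finite with a bound depending only on $K$. This is ensured by the Gaussian factor $e^{-|Y|^2}$ built into the definition of $A_1$, which dominates every polynomial contribution in $y$ coming from the connection and curvature pieces entering $\alpha_1$ and $\beta_1$, as well as the weight $(1+|y|)$ from the error term. A partition of unity on $X$ subordinate to trivialisations of $E$ reduces the whole estimate to this local Gaussian computation, yielding the uniform constant $C_K$.
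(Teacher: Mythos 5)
Your rescaling argument is correct: the fibrewise dilation identity $\phi_T^*\alpha_1=\alpha_T$, $\phi_T^*\beta_1=T\beta_T$, the Taylor expansion of $(\phi_T^{-1})^*u$ around the zero section, the identifications $\pi_*\alpha_1=1$ and $\pi_*\beta_1=0$, and the Gaussian decay built into $A_1$ together give exactly the rates $T^{-1/2}$ and $T^{-3/2}$. The paper itself gives no proof of this statement (it simply recalls it from Bismut--Zhang \cite{BZ92}), and your argument is essentially the standard one used there, so there is nothing to object to.
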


 In view of \eqref{eqDang1} and \eqref{eqabT},
 \begin{align}
	 \psi\(E,\nabla^{E}\)=\int_{0}^{\infty}\beta_{T}dT,
	 \end{align}
 is a well-defined current of degree $n-1$ on $\cE$ with values in $\pi^{*}o(E)$.
 
\begin{thm}
The current $\psi\(E,\nabla^{E}\)$ is locally 
integrable such that 
\begin{align}\label{WFphi}
	\mathrm{WF} \big(\psi\(E,\nabla^{E}\)\big)\subset E^{*}.
\end{align}
  The following identity of currents
on $\cE$ holds,
\begin{align}
	d\psi\(E,\nabla^{E}\)=\pi^{*}e\(E,\nabla^{E}\)-\delta_{X}.
\end{align}
\end{thm}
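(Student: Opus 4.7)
All three assertions essentially follow by integrating the identity $\partial_T \alpha_T = -d\beta_T$ from $T = 0$ to $T = \infty$ against compactly supported test forms, and using the two estimates of \eqref{eqabT} to control the endpoints $T = 0$ and $T = \infty$.

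To begin I would verify that $\psi(E,\nabla^E) = \int_0^\infty \beta_T\, dT$ is well defined as a locally integrable current. The form $\beta_T$ is smooth on $(0,\infty) \times \cE$; in a local bundle trivialisation it has the Gaussian--polynomial form
$$\beta_T = \frac{\widehat Y}{2\sqrt T}\, e^{-T|Y|^2}\, \omega_T,$$
where $\omega_T$ depends polynomially on $\sqrt T$ and on $\dot R^E$. The factor $\widehat Y$ forces $\beta_T$ to vanish on the zero section, and one has a local pointwise bound $|\beta_T| \leq C/\sqrt T$ uniformly for $T \in (0,1]$, hence $\int_0^1 \beta_T\, dT$ is integrable. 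For $T \geq 1$ the second estimate in \eqref{eqabT} gives $\int_1^\infty |\langle \beta_T, u\rangle|\, dT \leq C_K \|u\|_{C^1}$, so the full integral converges and defines a locally integrable $(n-1)$-current with values in $\pi^* o(E)$. For the differential identity, pair $\psi$ with a compactly supported test form $u$ and apply Stokes together with $\partial_T \alpha_T = -d\beta_T$:
$$\langle d\psi, u\rangle = \pm\int_0^\infty \langle d\beta_T, u\rangle\, dT = \mp\int_0^\infty \langle \partial_T \alpha_T, u\rangle\, dT = \mp\bigl(\lim_{T\to\infty}\langle \alpha_T, u\rangle - \langle \alpha_0, u\rangle\bigr).$$
The first estimate in \eqref{eqabT} says precisely that $\alpha_T \to \delta_X$ as $T \to \infty$ in the sense of currents, and $\alpha_0 = \pi^* e(E,\nabla^E)$ by construction; keeping track of signs yields $d\psi = \pi^* e(E,\nabla^E) - \delta_X$.

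The wave front inclusion $\mathrm{WF}(\psi) \subset E^*$ is the delicate part. On any compact subset of $\cE \setminus X$ one has $|Y| \geq c > 0$, so the Gaussian $e^{-T|Y|^2}$ and all its derivatives decay exponentially in $T$, uniformly in the point; consequently $\int_0^\infty \beta_T\, dT$ converges in $C^\infty(\cE \setminus X)$ and $\psi$ is smooth there. Near a point of $X$, I would work in a local bundle chart and perform a partial Fourier transform in the fiber variable $Y$: the Gaussian becomes a multiple of $T^{-n/2} e^{-|\xi|^2/(4T)}$, and integration in $T \in (0,\infty)$ against the polynomial prefactors produces rapid decay in $\xi$ on any closed cone avoiding $\xi = 0$. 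Derivatives in the base-direction variables act only on the polynomial and curvature factors and do not spoil this decay, so the only cotangent directions over $X$ that can carry singularities are those lying in the vertical cotangent bundle $\pi^* E^* \subset T^*\cE$, which is exactly $E^*$. The main obstacle is justifying the interchange of the Fourier transform in $Y$ with the $T$-integration and maintaining bounds uniform in the base point; by contrast the first two parts follow essentially mechanically from \eqref{eqabT}.
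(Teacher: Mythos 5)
The paper does not prove this statement at all: it is recalled verbatim from Bismut--Zhang \cite{BZ92} (Theorems 3.6--3.7 there), so your argument must stand on its own. Your treatment of the convergence of $\int_{0}^{\infty}\beta_{T}\,dT$ and of the identity $d\psi(E,\nabla^{E})=\pi^{*}e(E,\nabla^{E})-\delta_{X}$ is the standard route and is essentially correct. One small repair on the first point: splitting at $T=1$ and invoking the second bound of \eqref{eqabT} only shows that $\int_{1}^{\infty}\beta_{T}\,dT$ is a current of order $\leqslant 1$, which is weaker than local integrability. You should instead integrate your pointwise bound over all of $(0,\infty)$: each term of $\beta_{T}$ is $O(T^{(k-1)/2}|Y|e^{-T|Y|^{2}})$ with $0\leqslant k\leqslant n-1$, and $\int_{0}^{\infty}T^{(k-1)/2}|Y|e^{-T|Y|^{2}}\,dT=\Gamma(\tfrac{k+1}{2})\,|Y|^{-k}$, so $|\psi|\leqslant C(1+|Y|^{-(n-1)})$ near the zero section, which is $L^{1}_{\mathrm{loc}}$ on the $n$-dimensional fibres. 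This computation is worth doing explicitly because it also drives the wave front argument below.

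The wave front set part contains a claim that is false as written. You assert that integrating the fibrewise Fourier transform over $T$ ``produces rapid decay in $\xi$ on any closed cone avoiding $\xi=0$.'' If the partial Fourier transform in $Y$ were rapidly decreasing in every nonzero direction, $\psi(E,\nabla^{E})$ would be smooth across the zero section, contradicting $d\psi(E,\nabla^{E})=\pi^{*}e(E,\nabla^{E})-\delta_{X}$. What the computation actually yields is polynomial, not rapid, decay: performing the $T$-integral termwise writes $\psi$ locally as a finite sum $\sum_{k}Q_{k}(x,Y)\,|Y|^{-k}$ with $Q_{k}$ smooth, and the fibrewise Fourier transform of $|Y|^{-k}$ for $0<k<n$ is $c_{k}|\xi|^{k-n}$, merely polynomially bounded; the purely fibre-conormal directions genuinely lie in $\mathrm{WF}(\psi)$, which is exactly the content of \eqref{WFphi}. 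The statement you need is rapid decay of the full Fourier transform of $\chi\psi$ on closed cones avoiding $\{\xi_{x}=0,\ \xi_{Y}\neq 0\}$, and for that the base directions are essential: each term $Q_{k}(x,Y)|Y|^{-k}$ is a smooth-coefficient multiple of a homogeneous, locally integrable function of the fibre variable, hence a classical conormal distribution with respect to the zero section --- smoothness in $x$ gives rapid decay in $\xi_{x}$ after localisation, while the fibre factor remains only polynomially bounded in $\xi_{Y}$. Combined with the smoothness of $\psi$ on $\cE\setminus X$ (which you argue correctly via the exponential decay of the Gaussian for $|Y|\geqslant c>0$), this gives $\mathrm{WF}(\psi(E,\nabla^{E}))\subset E^{*}$. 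So the strategy is sound, but the decay mechanism as you describe it proves too much and must be replaced by the conormal-distribution argument.
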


 \begin{defin}
The  current  $\psi\(E,\nabla^{E}\)$ is called the Mathai-Quillen 
current. 
\end{defin}

\begin{re}
	The restriction of $\psi(E,\nabla^{E})$ to the
	sphere bundle  of $E$
	was first constructed in Mathai-Quillen \cite[Section 
	7]{MathaiQuillen}. 
	If $E=TX$, this restriction  coincides with the transgressed Euler
	class defined by Chern \cite{Chern2}.
\end{re}

Assume now $n\l m$. Let $s\in C^{\infty}(X,E)$ be a smooth section of $E$. Set 
\begin{align}\label{eqX0}
	X'=\{x\in X:s(x)=0\}. 
\end{align} 
Suppose that over $X'$, the differential of $s$ has maximal rank. By transversality, 
$X'$ is a smooth submanifold of $X$ of dimension $m-n$. Let $N_{X'/X}$ be the normal bundle to $X'$ in $X$. Using \cite[Theorem 
8.2.4]{HormanderBook1}, Bismut and Zhang have shown the following proposition in \cite[Remark 3.8]{BZ92}.

\begin{prop}\label{PropBZ38}
	The pull back currents $s^{*}\psi\(E,\nabla^{E}\), 
	s^{*}\delta_{X}$ on $X$ are	well defined such that 
	\begin{align}\label{eqWFsd}
	&{\rm WF}\(s^{*}\psi\(E,\nabla^{E}\)\)\subset N_{X'/X}^{*},&{\rm 
	WF}(s^{*}\delta_{X})\subset N^{*}_{X'/X}.
	\end{align}
	The following identity of currents on $X$ holds,
\begin{align}\label{eqsphi}
	d\(s^{*}\psi(E,\nabla^{E})\)=e(E,\nabla^{E})-s^{*}\delta_{X}. 
\end{align} 	
\end{prop}

\begin{re}\label{reBZ38}
 If $U\in C^{\infty}(X,TX)$ is a vector field on $X$ which has only isolated non degenerated 
zeros, i.e., 
 in a neighbourhood of a zero $x$ of $U$ there is a 
 system of coordinates  $y=(y_{1},\ldots,y_{m})$ and a matrix 
 $A$ with $\det 
 A\neq0$ such that  $x$ is represented by $y=0$ and 
 \begin{align}\label{eqU=Ay}
 	U(y)=Ay+\cO(|y|^{2}). 
 \end{align} 
By Proposition \ref{PropBZ38}, $U^{*}\psi(TX,\nabla^{TX})$ is a well 
defined current. Moreover, if $\e_{x}={\rm sgn}\det(A)$ is the Poincar\'e-Hopf 
index\footnote{If $U$ is Morse-Smale, then $(-1)^{m}\e_{x}=(-1)^{\ind(x)}.$ See the discussion 
after \eqref{eqindxEu} about the sign $(-1)^{m}$.}  at $x$, then 
$U^{*}\delta_{X}$ is a Radon measure  on $X$ defined  for $\mu\in 
C^{\infty}(X)$ by 
\begin{align}\label{eqUindd}
\int_{X}\mu 	\ U^{*}\delta_{X}=\sum_{x:\text{ zero of }U}\e_{x}\,\mu(x).  
\end{align} 
\end{re}

\subsection{Ray-Singer metric}\label{sRS}
We use the notation in Section \ref{S1M}. Recall that $(F,\nabla^{F})$ is a flat  vector bundle on $X$. 
Let $(\Omega^{{\scriptscriptstyle\bullet}}(X,F),d^{X})$ be the de Rham complex of smooth 
forms on $X$ with values in 
$F$.  By de Rham's theory, its  cohomology is $H^{{\scriptscriptstyle\bullet}}(X,F)$. 

Take metrics $g^{TX}$ and $g^{F}$ on $TX$ and $F$. Let 
$\<,\>_{\Lambda^{{\scriptscriptstyle\bullet}}(T^{*}X)\otimes F}$  be the induced metric on 
$\Lambda^{{\scriptscriptstyle\bullet}}(T^{*}X)\otimes F$.  Let $dv_{X}\in 
\Omega^{m}(X,o(TX))$  be the Riemannian volume form on $X$.  
For $s_{1},s_{2}\in \Omega^{{\scriptscriptstyle\bullet}}(X,F)$, set
\begin{align}\label{eqX}
	\<s_{1},s_{2}\>_{\Omega^{{\scriptscriptstyle\bullet}}(X,F)}=\int_{x\in X}\<s_{1}(x),s_{2}(x)\>_{\Lambda^{{\scriptscriptstyle\bullet}}(T^{*}X)\otimes 
	F}dv_{X}.
\end{align}
Then \eqref{eqX} defines an $L^{2}$-metric on $\Omega^{{\scriptscriptstyle\bullet}}(X,F)$. 

Let $d^{X*}$  be the formal adjoint of $d^{X}$ with respect to the 
$L^{2}$-metric $\<\cdot,\cdot\>_{\Omega^{{\scriptscriptstyle\bullet}}(X,F)}$. Put
\begin{align}
	\Box^{X}=d^{X}d^{X*}+d^{X*}d^{X}. 
\end{align}
Then $\Box^{X}$ is a formally self-adjoint  second order elliptic differential 
operator acting on $\Omega^{{\scriptscriptstyle\bullet}}(X,F)$. By Hodge theory, we have
\begin{align}\label{Hodge}
	\ker \Box^{X}\simeq H^{{\scriptscriptstyle\bullet}}(X,F). 
\end{align}
By \eqref{eqlambda} and \eqref{Hodge}, the restriction of the $L^{2}$-metric $\<\cdot,\cdot\>_{\Omega^{{\scriptscriptstyle\bullet}}(X,F)}$ to $\ker 
\Box^{X}$ induces a metric $|\cdot|^{RS,2}_{\lambda}$ on $\lambda$. 

Let $(\ker \Box^X)^{\bot}$ be the orthogonal  space to $\ker \Box^X$ 
in $\Omega^{{\scriptscriptstyle\bullet}}(X,F)$. Then $\Box^{X}$ acts as an invertible 
operator on $\(\ker\Box^{X}\)^{\bot}$. Let $\(\Box^{X}\)^{-1}$ be the 
inverse of $\Box^{X}$ acting on $\(\ker\Box^X\)^{\bot}$. Let 
$N^{\Lambda^{{\scriptscriptstyle\bullet}}(T^{*}X)}$ be the  number operator on 
$\Lambda^{{\scriptscriptstyle\bullet}}(T^{*}X)$, which is multiplication by $p$ on 
$\Lambda^{p}(T^{*}X)$. For $s\in \bC$ such that $\Re(s)>\frac{m}{2}$, set 
\begin{align}
	\zeta(s)=-\Tr\[(-1)^{N^{\Lambda^{{\scriptscriptstyle\bullet}}(T^{*}X)}}N^{\Lambda^{{\scriptscriptstyle\bullet}}(T^{*}X)}\(\Box^{X}\)^{-s}\].
\end{align}
By a result of Seeley \cite{Seeley66} or by \cite[Theorem 
7.10]{BZ92}, $\zeta(s)$ extends to a meromorphic function 
of $s\in \bC$, which is holomorphic at $s=0$.

\begin{defin}
The Ray-Singer metric $\|\cdot\|^{RS,2}_{\lambda}$ on 
$\lambda$ is defined  by 
\begin{align}
	\|\cdot\|^{RS,2}_{\lambda}=|\cdot|^{RS,2}_{\lambda}\exp\big(\zeta'(0)\big). 
\end{align}
\end{defin}

Let $\nabla^{TX}$ be the Levi-Civita connection on $(TX,g^{TX})$. Let
 $\psi(TX,\nabla^{TX})$ be the Mathai-Quillen current. 
By \eqref{eqMS} and by Proposition \ref{PropBZ38}, for any Morse-Smale vector field 
$V$, the pull back $V^{*}\psi(TX,\nabla^{TX})$ is a well-defined current of degree $m-1$ on $X$ with values in $o(TX)$. 
Set
\begin{align}
	\theta\(F,g^{F}\)=\Tr\[(g^{F})^{-1}\nabla^{F}g^{F}\]\in 
	\Omega^{1}(X).
\end{align}
Then, $\theta(F,g^{F})$ is a closed $1$-form and its cohomology class 
$\theta(F)=[\theta(F,g^{F})]\in H^{1}(X)$ does not depend on the 
metric $g^F$. Up to a normalisation, the class $\theta(F)$ coincides with  the  first  Kamber-Tondeur class \cite{KamberTondeur74}. 

The main result of Bismut-Zhang \cite[Theorem 0.2]{BZ92} is the following.

\begin{thm}\label{BZ}
	Suppose that  $f$ is a Morse function, whose gradient $\nabla f$ with respect to 
some Riemannian metric is Morse-Smale. The following identity holds, 
	\begin{align}\label{eqBZ}
	\log
	\(\frac{\|\cdot\|^{RS,2}_{\lambda}}{\|\cdot\|^{M,2}_{\lambda,-\nabla f}}\)=-\int_{X}\theta\(F,g^{F}\)\(\nabla f\)^{*}\psi\(TX, \nabla ^{TX}\).
\end{align}
\end{thm}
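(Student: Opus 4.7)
The approach is Witten's deformation of the de Rham complex, as in Bismut-Zhang. For $T \geq 0$, one replaces $g^F$ by $g^F_T = e^{-2Tf}g^F$, and writes $\|\cdot\|^{RS,2}_{\lambda,T}$ for the resulting Ray-Singer metric. On the operator side this amounts to conjugating $d^X$ into the Witten-deformed differential $d_T = d^X + T\,df\wedge$, with Laplacian $\Box_T$. Since $g^F_0 = g^F$, I would split
\[
\log\|\cdot\|^{RS,2}_{\lambda} - \log\|\cdot\|^{M,2}_{\lambda,\nabla f} = -\int_0^\infty \frac{d}{dT}\log\|\cdot\|^{RS,2}_{\lambda,T}\, dT + \Bigl(\lim_{T\to\infty}\log\|\cdot\|^{RS,2}_{\lambda,T} - \log\|\cdot\|^{M,2}_{\lambda,\nabla f}\Bigr),
\]
and handle the two pieces separately.

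For the variational piece, a standard anomaly computation shows that $\tfrac{d}{dT}\log\|\cdot\|^{RS,2}_{\lambda,T}$ is given by an integral over $X$ of a locally computable density built from $\partial_T \theta(F,g^F_T) = -2\,df$ and the Euler form of $\nabla^{TX}$. Integrating in $T$ and applying the Mathai-Quillen transgression $d\psi(TX,\nabla^{TX}) = \pi^*e(TX,\nabla^{TX}) - \delta_X$ along the rescaled graph of $T\nabla f$, I expect the principal contribution to be exactly $-\int_X \theta(F,g^F)(\nabla f)^*\psi(TX,\nabla^{TX})$, plus a boundary term at $T = \infty$ to be cancelled in the next step.

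For the second piece, I would appeal to the semiclassical spectral analysis of $\Box_T$. As $T \to \infty$, $\Box_T$ has a spectral gap: a cluster of exponentially small eigenvalues below a band growing like $cT$. The small-eigenvalue subspace has total dimension $\#A$, with its degree $p$ part of rank $\#\{x \in A : \ind(x) = p\}$; each critical point $x$ contributes a normalized eigenform with Gaussian profile in Morse coordinates centered at $x$. One then verifies that the induced finite-dimensional complex is canonically quasi-isomorphic to the Thom-Smale complex $\bigoplus_{x \in A} F_x$, with the off-diagonal matrix coefficients of $d_T$ between the states at $x$ and $y$ (when $\ind(y) = \ind(x)-1$) reproducing asymptotically $\sum_{a \in \Gamma(x,y)} n(a)\tau(a)\,e^{-T(f(x)-f(y))}$. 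Comparing the $L^2$- and Milnor metrics on the determinant line of this complex produces an explicit $T$-dependent factor which, together with a heat-kernel-type estimate for the torsion contribution of the large-eigenvalue subspace, should exactly absorb the $T = \infty$ boundary term from the first step.

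The main obstacle is the large-$T$ localization: proving the uniform spectral gap, constructing the correct normalized small-eigenvalue states, and especially matching the $d_T$ matrix coefficients to the signed gradient-flow counts $n(a)$. This demands WKB analysis near each critical point, uniform resolvent bounds on the complement, and a stationary-phase expansion along the flow lines connecting them; it is the most intricate piece of Bismut-Zhang's original argument, and everything else in the proof more or less follows from it once combined with the anomaly/transgression step above.
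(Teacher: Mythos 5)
You should first be aware that the paper does not prove this statement at all: Theorem \ref{BZ} is quoted verbatim as Bismut--Zhang's Theorem 0.2 from \cite{BZ92} and is used as a black box in the proof of the paper's main result (Theorem \ref{Thm1}). So there is no internal proof to compare against; what you have written is an outline of the strategy of the \emph{original} Bismut--Zhang argument. As such an outline it is pointed in the right direction --- Witten deformation of $g^{F}$ by $e^{-2Tf}$, anomaly formulas producing the Mathai--Quillen transgression, a spectral gap for $\Box_{T}$, and identification of the small-eigenvalue complex with the Thom--Smale complex --- but it is a sketch in which essentially all of the analytic content is deferred, and the deferred content is the entire difficulty (it occupies most of the 235 pages of \cite{BZ92}).

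Two concrete problems with the sketch as written. First, your decomposition presupposes that $\lim_{T\to\infty}\log\|\cdot\|^{RS,2}_{\lambda,T}$ exists; it does not. The Ray--Singer metric for the deformed data diverges as $T\to\infty$, with terms growing linearly in $T$ (proportional to $\sum_{x\in A}(-1)^{\ind(x)}f(x)$ and to $\int_{X}f\,e(TX,\nabla^{TX})$) and logarithmically in $T$ (a term of the shape $\frac{r}{2}\big(m\,\chi(X)/2-\sum_{x}(-1)^{\ind(x)}\ind(x)\big)\log(T/\pi)$ coming from the Gaussian normalization of the small-eigenvalue eigenforms and from the large-eigenvalue torsion). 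The correct statement is that the \emph{difference} of the two sides has a finite limit after these divergences are shown to cancel, and establishing the precise divergent asymptotics on both the analytic and the combinatorial side is one of the hardest intermediate results of \cite{BZ92}; your sketch treats this as a bookkeeping cancellation. Second, the step you flag as the main obstacle --- matching the matrix coefficients of $d_{T}$ on the small-eigenvalue space with the signed counts $n(a)$ by WKB/tunneling analysis --- is precisely what Bismut and Zhang \emph{avoid}: they instead compose the spectral projection with the de Rham integration map $\alpha\mapsto\sum_{x\in A}\big(\int_{\overline{W^{u}_{x}}}\alpha\big)$ onto the Thom--Smale complex (using Laudenbach's appendix for the compactification of the unstable cells) and show this is an asymptotic isometry, which sidesteps the stationary-phase computation of individual tunneling elements. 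So even granting all the analysis, your route through Helffer--Sj\"ostrand-type matrix elements is a genuinely different, and substantially harder, path than the one actually used to prove the theorem. If your goal is to prove Theorem \ref{BZ} rather than cite it, you need to either carry out these steps or restructure along the lines of \cite{BZ92}; if your goal is to follow the present paper, no proof is required here.
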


\subsection{A variation formula for certain characteristic 
form}\label{sMa}Let us follow \cite[Section VI.a)  and VI.b)]{BZ92}.  
Let  $(U_{\ell})_{0\l \ell\l 1}$ be a smooth family of vector fields 
on $X$, such that each $U_{\ell}$ has only isolated non degenerated 
zeros. By Proposition \ref{PropBZ38} and Remark \ref{reBZ38}, the integral 
\begin{align}
 \int_{X}\theta\(F,g^{F}\)U_{\ell}^{*}\psi\(TX, \nabla ^{TX}\)
\end{align} 
is well defined. Let us study its variation  with respect to $\ell\in [0,1]$. 

Let $q: [0,1]\times X\to X$ be the obvious projection. Consider a 
smooth section $U\in C^{\infty}([0,1]\times X, q^{*}(TX))$ defined by 
\begin{align}
	U: (\ell,x)\in [0,1]\times X\to U_{\ell}(x)\in T_{x}X. 
\end{align}
By the consideration after \eqref{eqX0}, the zero set of $U$ is a 
manifold of dimension $1$. 
Therefore, if $x_{1,0}, \ldots, x_{N,0}$ are the zeros of $U_{0}$, 
we can parametrize the zeros of $U_{\ell}$  by 
$x_{1,\ell},\ldots,x_{N,\ell}$ such that all the maps $\ell\in 
[0,1]\to x_{i,\ell}\in X$ are
smooth. Also, the Poincar\'e-Hopf index of $U_{\ell}$ at 
$x_{i,\ell}$ does not depend on $\ell$ and will be denoted by 
$\e_{i}\in \{\pm1\}$. The following proposition is a generalisation of \cite[Proposition 
6.4]{BZ92}. Since we will use this proposition several times in 
Section \ref{s33}, let us give a detailed proof. 


\begin{prop}\label{propBZVVa}
	The following identity holds, 
	\begin{align}\label{eqDDMA}
	\int_{X}\theta(F,g^{F})	 
		\Big(U_{1}^{*}\psi\(TX,\nabla^{TX}\)-	
		U_{0}^{*}\psi\(TX,\nabla^{TX}\)\Big)
		=\sum_{i=1}^{N}\e_{i}\int_{0}^{1}\theta\(F,g^{F}\)(\dot{x}_{i,\ell})d\ell.
	\end{align} 
\end{prop}
\begin{proof}
	Let us follow the proof of \cite[Proposition 6.1, Proposition 
	6.4]{BZ92}. Equip the pull back vector bundle 
$q^{*}(TX)$ over $[0,1]\times X$  with the pull back metric and the  pullback metric 
connection $\nabla^{q^{*}(TX)}$. Let 
$\psi\(q^{*}(TX),\nabla^{q^{*}(TX)}\)$ be the corresponding 
Mathai-Quillen current. By Proposition \ref{PropBZ38},  
$U^{*}\psi\(q^{*}(TX),\nabla^{q^{*}(TX)}\)$ and 
$U^{*}\delta_{[0,1]\times X}$ are  well defined currents such that 
\begin{align}\label{eqdphi01}
	d^{[0,1]\times X} \(	
	U^{*}\psi\(q^{*}(TX),\nabla^{q^{*}(TX)}\)\)=e\(q^{*}(TX),\nabla^{q^{*}(TX)}\)-U^{*}\delta_{[0,1]\times X}. 
\end{align} 
By our construction, 
\begin{align}\label{eqdphi02}
	e\(q^{*}(TX),\nabla^{q^{*}(TX)}\)=q^{*}e\(TX,\nabla^{TX}\).
\end{align} 
Since $\theta(F,g^{F})$ is a closed $1$-form on $X$, by 
\eqref{eqdphi01} and \eqref{eqdphi02}, we have 
\begin{align}
		d^{[0,1]\times X} \(q^{*}\theta(F,g^{F})	\wedge 
		U^{*}\psi\(q^{*}(TX),\nabla^{q^{*}(TX)}\)\)
		=q^{*}\theta\(F,g^{F}\) \wedge U^{*}\delta_{[0,1]\times X}. 
\end{align} 
Integrating the above formula over $[0,1]\times X$, by the Stokes 
formula,  we get \eqref{eqDDMA}. 
\end{proof}

\subsection{Proof of the main result}\label{s33}

 We restate our main 
result Theorem \ref{thm1}, which is an extension of Theorem \ref{BZ}.  
\begin{thm}\label{Thm1}
	Suppose that $V$ is a Morse-Smale vector field. The following identity holds,
	\begin{align}
	\log
	\(\frac{\|\cdot\|^{RS,2}_{\lambda}}{\|\cdot\|^{M,2}_{\lambda,V}}\)=-\int_{X}\theta\(F,g^{F}\)(-V)^{*}\psi\(TX,\nabla^{TX}\).
\end{align}
\end{thm}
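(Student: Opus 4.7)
The plan is to reduce Theorem \ref{Thm1} to Bismut-Zhang's gradient case (Theorem \ref{BZ}) via Franks' construction, and to identify the remaining discrepancy by transgressing the Mathai-Quillen current along a one-parameter family of vector fields. By Proposition \ref{propV01} I may assume that all critical elements of $V$ are already in standard form, so that Proposition \ref{propfV} yields a Morse-Smale gradient vector field $\nabla f$. Applying Bismut-Zhang's formula \eqref{eqBZ} to $\nabla f$ and combining it with the Milnor metric comparison of Proposition \ref{propMM}, the theorem reduces to the identity
\begin{equation*}
\int_{X}\theta(F,g^{F})\bigl[(\nabla f)^{*}-V^{*}\bigr]\psi(TX,\nabla^{TX})=\sum_{\gamma\in B}(-1)^{\ind(\gamma)}\log\|\det\tau(a'_{\gamma})\|^{2}_{\det F_{x'_{\gamma}}\otimes (\det F_{x_{\gamma}})^{-1}},
\end{equation*}
which, since $V$ and $\nabla f$ agree outside $\bigcup_{\gamma}U_{\gamma}$ by Proposition \ref{propfV}, already decomposes as a sum of contributions localized in the tubular neighborhoods $U_{\gamma}$.

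For the transgression, I would use the family $(V_{\epsilon})_{0\le\epsilon\le 1}$ of Remark \ref{reK} with $V_{0}=V$, $V_{1}=\nabla f$, and $V_{\epsilon}$ nowhere vanishing on the cross-section $\{1/4\}\times \mathbb{D}^{m-1}\subset U_{\gamma}$. Set $\tilde V\colon X\times[0,1]\to TX$, $(x,\epsilon)\mapsto V_{\epsilon}(x)$, and let $p\colon X\times[0,1]\to X$ be the projection. Applying Stokes' theorem on $X\times[0,1]$ to $p^{*}\theta(F,g^{F})\wedge \tilde V^{*}\psi(TX,\nabla^{TX})$, using the formula $d\psi(TX,\nabla^{TX})=\pi^{*}e(TX,\nabla^{TX})-\delta_{X}$, the closedness $d\theta(F,g^{F})=0$, and the vanishing of $\theta(F,g^{F})\wedge e(TX,\nabla^{TX})$ on the $m$-dimensional $X$ for degree reasons, I would obtain
\begin{equation*}
\int_{X}\theta(F,g^{F})\bigl[(\nabla f)^{*}-V^{*}\bigr]\psi(TX,\nabla^{TX})=\int_{X\times[0,1]} p^{*}\theta(F,g^{F})\wedge \tilde V^{*}\delta_{X}.
\end{equation*}

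The current $\tilde V^{*}\delta_{X}$ is supported on the zero locus $Z=\tilde V^{-1}(\text{zero section})$. Outside $\bigcup_{\gamma}U_{\gamma}\times[0,1]$, $\tilde V$ factors through $V\colon X\to TX$, so $p^{*}\theta\wedge\tilde V^{*}\delta_{X}$ is pulled back from an $(m+1)$-form on $X$ and therefore vanishes. Inside each $U_{\gamma}\times[0,1]$, the cross-section property together with the hyperbolicity of $\nabla f$ at $x_{\gamma},x'_{\gamma}$ forces $Z\cap(U_{\gamma}\times[0,1])$ to be a $1$-dimensional arc with boundary $\{(x_{\gamma},1),(x'_{\gamma},1)\}$, whose projection to $U_{\gamma}$ is freely homotopic in $U_{\gamma}\setminus(\{1/4\}\times\mathbb{D}^{m-1})$ to the integral curve $a'_{\gamma}$ of Proposition \ref{propfV}. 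A local model calculation in the standard coordinates of Section \ref{sMSMF} identifies $\tilde V^{*}\delta_{X}$ restricted to this arc with the integration current carrying a sign $(-1)^{\ind(\gamma)}$, arising from the orientation of $E^{u}_{\gamma}$ and the Morse indices of the hyperbolic linearization at $x_{\gamma},x'_{\gamma}$. Combined with the flat-frame identity $\int_{a'_{\gamma}}\theta(F,g^{F})=\log\|\det\tau(a'_{\gamma})\|^{2}_{\det F_{x'_{\gamma}}\otimes(\det F_{x_{\gamma}})^{-1}}$, which follows from $\theta(F,g^{F})=d\log\det g^{F}$ in any flat frame, this yields the required sum.

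The main technical obstacle will be the sign and orientation bookkeeping. Since $\psi(TX,\nabla^{TX})$ takes values in $\pi^{*}o(TX)$, the pullback against $\delta_{X}$ produces $o(TX)$-twisted contributions that must be traced case by case in each of the four standard forms of Section \ref{sMSMF}; the twist or non-orientable cases (2)--(4) in particular involve the factor $\Delta(\gamma)$ and must be reconciled with the relation $n(a_{\gamma})n(a'_{\gamma})=-\Delta(\gamma)$ of \eqref{nana}. A secondary issue is that $\tilde V$ need not be everywhere transverse to the zero section along the homotopy; this can be handled either by a small generic perturbation of the family preserving the cross-section property, or by observing that any non-transverse bifurcation locus is of measure zero and makes no contribution to the transgression integral.
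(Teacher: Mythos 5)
Your reduction is exactly the paper's: Proposition \ref{propV01} to put the critical elements in standard form, then Proposition \ref{propfV}, Proposition \ref{propMM} and Theorem \ref{BZ} to reduce everything to the identity \eqref{eqaaa}. Where you genuinely diverge is in proving \eqref{eqaaa}. The paper never confronts the singular contribution of the homotopy's zero set: it chooses a triangulation $K$ adapted to the $U_{\gamma}$ (with $\sigma^{m-1}_{\gamma}=\{1/4\}\times\ol{\mathbb{D}}^{m-1}$), splits $\theta(F,g^{F})=dW_{0}+W_{1}$ with $W_{0}$ a primitive on each top simplex and $\Supp W_{1}\subset K^{m-1}$, converts the $dW_{0}$ part into the point evaluations $W_{0}(x'_{\gamma})-W_{0}(x_{\gamma})=\log\|\det\tau(a'_{\gamma})\|^{2}$ via \cite[(6.16)]{BZ92}, and is then left only with $\int_{X}W_{1}\wedge(\nabla f)^{*}\psi=\int_{X}W_{1}\wedge V^{*}\psi$, which follows from homotopy invariance of $V^{*}\psi$ through \emph{nonvanishing} vector fields near $\Supp W_{1}$ (Remark \ref{reK} and \cite[Proposition 6.4]{BZ92}), where $\psi$ is smooth. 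You instead transgress over $X\times[0,1]$ and meet $\tilde V^{*}\delta_{X}$ head-on. Your geometric picture is right: in the model of Remark \ref{reK} the zero locus $Z\cap(U_{\gamma}\times[0,1])$ is an arc born at $(3/4,0,1/2)$ with boundary $\{(x_{\gamma},1),(x'_{\gamma},1)\}$ projecting onto $a'_{\gamma}$ (not merely freely homotopic to it — and for the integral of the closed form $\theta$ you need homotopy rel endpoints, which holds since $U_{\gamma}\setminus(\{1/4\}\times\mathbb{D}^{m-1})$ is contractible); the flat-frame identity for $\int_{a'_{\gamma}}\theta$ is correct; and the claimed sign $(-1)^{\ind(\gamma)}$ is the one forced by $\ind(x_{\gamma})=\ind(\gamma)+1$, $\ind(x'_{\gamma})=\ind(\gamma)$. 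What your route buys is a direct dynamical interpretation of the heteroclinic correction as the swept-out zero set of Franks' homotopy. What it costs is that the two hardest points are precisely the ones you defer: (i) Stokes at the level of currents for $\tilde V^{*}\psi$, which needs the approximation $\int_{0}^{T}\beta_{s}\,ds\to\psi$ and the estimates \eqref{eqabT} plus transversality of $\tilde V$ to the zero section along all of $Z$ (this does hold for the explicit family, including at the birth point, via the $\partial/\partial\epsilon$ derivative); and (ii) the local degree and orientation computation producing $(-1)^{\ind(\gamma)}$, to be redone in each of the standard forms (2)--(4) where the quotient identifications and the $o(TX)$- and $o(E^{u}_{\gamma})$-twists enter. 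Neither is a flaw in principle, but until (ii) is actually carried out the argument is a program rather than a proof; the paper's $W_{0}/W_{1}$ splitting is engineered exactly to avoid that computation.
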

\begin{proof}
Take 	$\(V_{\ell}\)_{0\l \ell\l 1}$ as in Proposition
\ref{propV01}. 
By Remark \ref{reDMA}, we have
\begin{align}\label{MVMV1}
	\|\cdot\|^{M,2}_{\lambda,V}=\|\cdot\|^{M,2}_{\lambda,V_{1}}.
\end{align}
Since the critical elements of $V$ and $V_{1}$ coincide, the fixed points of $V_{\ell}$ form  smooth loops on $X$. By Remark \ref{reVl}, we can assume that the fixed points of $V_{\ell}$ are in a small neighbourhood of 
the fixed points set of $V$.  In particular, the above loops are 
contractible. By Proposition \ref{propBZVVa} and by the closedness of $\theta(F,g^{F})$, we have
\begin{align}\label{XVV1}
	\int_{X}\theta\(F,g^{F}\)(-V)^{*}\psi\(TX,
	\nabla^{TX}\)=\int_{X}\theta\(F,g^{F}\)(-V_{1})^{*}\psi\(TX, \nabla^{TX}\).
\end{align}
By \eqref{MVMV1} and \eqref{XVV1}, it is enough to show our theorem
for the Morse-Smale vector field $V$ whose  critical elements are all 
of standard forms.
	
%

Take $f$  as in  Proposition
	\ref{propfV}.
%
By Proposition \ref{propMM} and Theorem  \ref{BZ}, we have
\begin{multline}\label{LLa}
	\log
	\(\frac{\|\cdot\|^{RS,2}_{\lambda}}{\|\cdot\|^{M,2}_{\lambda,V}}\)=-\int_{X}\theta\(F,g^{F}\)\(\nabla f\)^{*}\psi(TX, \nabla ^{TX})\\
	+\sum_{\gamma\in
		B}(-1)^{\ind(\gamma)}\log
		\left\|\det\tau(a_{\gamma}')\right\|^{2}_{\det
		F_{x'_{\gamma}}\otimes \(\det
		F_{x_{\gamma}}\)^{-1}}.
\end{multline}
By \eqref{LLa}, it remains to show
\begin{multline}\label{eqaaa}
	\int_{X}\theta\(F,g^{F}\)\(\nabla
	f\)^{*}\psi\(TX,
	\nabla^{TX}\)-\int_{X}\theta\(F,g^{F}\)(-V)^{*}\psi\(TX,\nabla^{TX}\)\\
=\sum_{\gamma\in
		B}(-1)^{\ind(\gamma)}\log
		\left\|\det\tau(a_{\gamma}')\right\|^{2}_{\det
		F_{x'_{\gamma}}\otimes \(\det
		F_{x_{\gamma}}\)^{-1}}.
\end{multline}
We assume that all the closed orbits are in standard form
of Case (1).  Cases (2)-(4) can be dealt similarly.

Following \cite[Section IV.c)]{BZ92},
choose a smooth triangulation $K$ of $X$ such that $A\cap
K^{m-1}=\varnothing$, and such that on $\ol{U}_{\gamma}\simeq \mathbb{S}^{1}\times
\ol{\mathbb{D}}^{m-1}$ the
triangulation is given by the $m$-simplex
$\sigma^{m}_{\gamma}=\(\mathbb{S}^{1}-\{1/4\}\)\times
\ol{\mathbb{D}}^{m-1}$ and $(m-1)$-simplex
$\sigma^{m-1}_{\gamma}=\{1/4\}\times \ol{\mathbb{D}}^{m-1}$.

On each simplex $\sigma\in K^{m}\backslash K^{m-1}$ of maximal degree, choose a
primitive $W_{0,\sigma}\in C^{\infty}(\sigma)$ of $\theta(F,g^{F})$, 
such that on $\sigma$ we have 
\begin{align}
	dW_{0,\sigma}=\theta\(F,g^{F}\).
\end{align}
Let $W_{0}$ be the locally integrable current  on $X$, such that
for each $\sigma\in  K^{m}\backslash K^{m-1}$ we have 
\begin{align}
W_{0}|_{\sigma}=W_{0,\sigma}.	
\end{align}
 By our construction of $K$, for $\gamma\in B$, the two points 
$x_{\gamma},x_{\gamma}'$, and the integral curve $a_{\gamma}'$ are in the same simplex $\sigma^{m}_{\gamma}\in
K^{m}$, so that
\begin{align}\label{W0W0}
	W_{0}(x_{\gamma}')-W_{0}(x_{\gamma})=\log
		\left\|\det\tau(a_{\gamma}')\right\|^{2}_{\det
		F_{x'_{\gamma}}\otimes \(\det
		F_{x_{\gamma}}\)^{-1}}.
\end{align}

Set
\begin{align}\label{eqW1}
W_{1}=\theta\(F,g^{F}\)-dW_{0}.
\end{align}
Then $W_{1}$ is  a closed
current  of degree $1$ on $X$ such that $\mathrm{Supp}(W_{1})\subset 
K^{m-1}$.  By \eqref{eqWFsd} and by $A\cap K^{m-1}=\varnothing$, $(-V)^{*}\psi(TX,\nabla^{TX})$ is 
smooth in the neighbourhood of the support of $W_{1}$, so that 
\begin{align}
	W_{1}\wedge (-V)^{*}\psi\(TX,\nabla^{TX}\).
\end{align} 
is a well defined current on $X$.  By \eqref{eqsphi}, \eqref{eqUindd}, and \eqref{eqW1}, we have
\begin{multline}\label{eqVtoW1}
	-\int_{X}\theta(F,g^{F}) (-V)^{*}\psi\(TX,\nabla^{TX}\)=\int_{X}W_{0}\ e\(TX,\nabla^{TX}\)
	-\sum_{x\in A}(-1)^{\ind(x)}W_{0}(x)\\
	-\int_{X}W_{1}\wedge (-V)^{*}\psi\(TX,\nabla^{TX}\).
\end{multline} 
Similar when $-V$ is replaced by $\nabla f$, we get
 \begin{multline}\label{eqnftoW1}
	-\int_{X}\theta(F,g^{F}) (\nabla f)^{*}\psi\(TX,\nabla^{TX}\)=\int_{X}W_{0}\ e\(TX,\nabla^{TX}\)
	-\sum_{x\in A}(-1)^{\ind(x)}W_{0}(x)\\
+\sum_{\gamma\in 
B}(-1)^{\ind(\gamma)}\(W_{0}(x_{\gamma})-W_{0}(x_{\gamma}')\)	-\int_{X}W_{1}\wedge (-V)^{*}\psi\(TX,\nabla^{TX}\).
\end{multline} 

By \eqref{W0W0}, \eqref{eqVtoW1}, and \eqref{eqnftoW1}, we see that \eqref{eqaaa} is equivalent to
\begin{align}\label{eqLast}
	\int_{X}W_{1} \wedge (\nabla
	f)^{*}\psi\(TX,\nabla^{TX}\)=\int_{X}W_{1}\wedge
	(-V)^{*}\psi\(TX,\nabla^{TX}\).
\end{align}
By \eqref{eqdf=V}, on any simplex in
$K^{m-1}$ other than $\sigma_{\gamma}^{m-1}$, we have $\nabla f=-V$.
By Remark \ref{reK}, near $\sigma_{\gamma}^{m-1}$, $\nabla
	f$ and $-V$ can be connected by a family of vector fields without	
	zero. Using the fact that $\mathrm{Supp}(W_{1})\subset K^{m-1}$, 
	and by a version of Proposition \ref{propBZVVa} where 
	$\theta(F,g^{F})$ is replaced by the closed current $W_{1}$, we get \eqref{eqLast}.
%
The proof of our  theorem is completed.
\end{proof}

\def\cprime{$'$}
\providecommand{\bysame}{\leavevmode\hbox to3em{\hrulefill}\thinspace}
\providecommand{\MR}{\relax\ifhmode\unskip\space\fi MR }
\providecommand{\MRhref}[2]{%
  \href{http://www.ams.org/mathscinet-getitem?mr=#1}{#2}
}
\providecommand{\href}[2]{#2}

\end{document}